\newtheorem{thm}{Theorem}[section]
\newtheorem{la}[thm]{Lemma}
\newtheorem{Defn}[thm]{Definition}
\newtheorem{Remark}[thm]{Remark}
\newtheorem{Note}[thm]{Note}
\newtheorem{prop}[thm]{Proposition}
\newtheorem{cor}[thm]{Corollary}
\newtheorem{Example}[thm]{Example}
\newtheorem{Examples}[thm]{Examples}
\newtheorem{Problems}[thm]{Problems}
\newtheorem{Problem}[thm]{Problem}
\newtheorem{Convention}[thm]{Convention}
\newtheorem{Number}[thm]{\!\!}
\newenvironment{defn}{\begin{Defn}\rm}{\end{Defn}}
\newenvironment{rem}{\begin{Remark}\rm}{\end{Remark}}
\newenvironment{numba}{\begin{Number}\rm}{\end{Number}}
\newenvironment{proof}{{\noindent\bf Proof.}}%
                  {\nopagebreak\hspace*{\fill}$\Box$\medskip\medskip\par}   
\newcommand{\Punkt}{\nopagebreak\hspace*{\fill}$\Box$}
\newcommand{\wb}{\overline}
\newcommand{\ve}{\varepsilon}
\newcommand{\at}{\symbol{'100}}
\newcommand{\wt}{\widetilde}
\newcommand{\tensor}{\otimes}
\newcommand{\impl}{\Rightarrow}
\newcommand{\mto}{\mapsto}
\newcommand{\N}{{\mathbb N}}
\newcommand{\R}{{\mathbb R}}
\newcommand{\K}{{\mathbb K}}
\newcommand{\Z}{{\mathbb Z}}
\newcommand{\sub}{\subseteq}
\DeclareMathOperator{\im}{im}
\DeclareMathOperator{\id}{id}
\newcommand{\cA}{{\cal A}}
\newcommand{\obs}{{\footnotesize\rm s}}
\newcommand{\obc}{{\footnotesize\rm c}}
\newcommand{\obu}{{\footnotesize\rm u}}
\newcommand{\obcs}{{\footnotesize\rm cs}}
\DeclareMathOperator{\Lip}{Lip}
\begin{document}
\begin{center}
{\Large\bf Invariant manifolds for\vspace{2mm} finite-dimensional
non-archimedean dynamical systems}\\[6mm]
{\bf Helge Gl\"{o}ckner}\vspace{4mm}
\end{center}
\begin{abstract}
\hspace*{-7.2 mm}
Let $M$ be an analytic manifold modelled
on an ultrametric Banach space
over a complete ultrametric field $\K$.
Let $f\colon M \to M$ be an analytic
diffeomorphism and $p$ be a fixed point of $f$.
We discuss invariant manifolds around $p$, like
stable manifolds,
centre-stable manifolds and
centre manifolds, with an emphasis on results
specific to the case that $M$ has finite dimension.
The results have applications
in the theory of Lie groups
over totally disconnected local fields.\vspace{3mm}
\end{abstract}
{\footnotesize {\em Classification}:
37D10 (Primary) 
46S10, 
26E30 (Secondary)\\[2.5mm]
{\em Key words}: Dynamical system, fixed point,
invariant manifold, stable manifold, centre manifold, centre-stable manifold, ultrametric field,
local field, non-archimedean analysis, analytic map, Lie group,
contractive automorphism, contraction group, scale function}\vspace{6mm}
\begin{center}
{\bf\large Introduction and statement of main results}
\end{center}
Guided by the classical theory
of invariant manifolds for time-discrete
smooth dynamical
systems over the real ground field (cf.\ \cite{HaK}, \cite{HPS}, \cite{Ir1},
\cite{Wel}),
invariant manifolds have recently also been
constructed for time-discrete
analytic dynamical systems over a complete ultrametric field $(\K,|.|)$  \cite{EXP}.
The invariant manifolds are useful in the theory of Lie groups
over local fields, where they allow results to be extended
to ground fields of positive characteristic,
which previously where available only in characteristic $0$
(i.e., for $p$-adic Lie groups).
To enable these Lie theoretic applications,
the general theory from~\cite{EXP}
is not sufficient, and additional,
more specific results concerning ultrametric invariant
manifolds are needed.
The goal of this article is to provide such
complementary results, including simplifications of the
theory from \cite{EXP} for finite-dimensional dynamical systems,
which make it applicable in the situations at hand.\\[2.5mm]
As in the real case, hyperbolicity assumptions
are essential for a discussion of invariant manifolds.
Roughly speaking, a continuous linear self-map $\alpha\colon E\to E$
of an ultrametric Banach space $E$ over $\K$
is called hyperbolic if $E$ admits a decomposition $E=E_\obs\oplus E_\obu$
into a stable subspace $E_\obs$ on which $\alpha$ is contractive
and an unstable subspace $E_\obu$
on which $\alpha$ is expansive.
More precisely,
$\alpha$ is called \emph{hyperbolic} if
it is $1$-hyperbolic
in the following sense~\cite{EXP}:\\[2.5mm]
{\bf Definition.}
The continuous linear map
$\alpha\colon E\to E$ is said to be \emph{$a$-hyperbolic}
for $a\in \; ]0,\infty[$
if there exist $\alpha$-invariant
vector subspaces $E_{a,\obs}$ and $E_{a,\obu}$ of $E$ such that
$E=E_{a,\obs} \oplus E_{a,\obu}$,
and an ultrametric norm $\|.\|$ on $E$ defining its topology,
with properties (a)--(c):
\begin{itemize}
\item[(a)]
$\|x+y\|=\max\{\|x\|,\|y\|\}$
for all $x\in E_{a, \obs}$ and $y\in E_{a,\obu}$;
\item[(b)]
$\alpha_2:=\alpha|_{E_{a,\obu}}$
is invertible;
\item[(c)]
$\|\alpha_1\|< a$ and $\frac{1}{\|\alpha_2^{-1}\|}>a$
holds for the operator norms with respect to $\|.\|$,
where $\alpha_1:=\alpha|_{E_{a,\obs}}$
(and $\frac{1}{0}:=\infty$).
\end{itemize}
Then $E_{a,\obs}$ is uniquely determined and if $\alpha$ is invertible
or $E$ finite-dimensional,
then also $E_{a,\obu}$ is unique (see \cite[Remark 6.4]{EXP}
and Remark~\ref{centunique} below).
If $a=1$, we also write $E_\obs:=E_{1,\obs}$
and $E_\obu:=E_{1,\obu}$.\\[2.5mm]
Similarly, $E$ may have an $a$-centre-stable subspace
$E_{a,\obcs}$ such that
\[
E=E_{a,\obcs}\oplus E_{a,\obu},
\]
or an $a$-centre subspace $E_{a,\obc}$ such that
\[
E=E_{a,\obs}\oplus E_{a,\obc}\oplus E_{a,\obu};
\]
see Definitions~\ref{defcsub} and \ref{defacentre} for details.
We omit the subscript ``$a$''
if $a=1$.\\[3.5mm]
It is useful to fix a notation
for the set of absolute values
of eigenvalues,
in the finite-dimensional case.\\[2.5mm]
{\bf Definition.}
Let $\alpha\colon E\to E$ be a linear
self-map of a
finite-dimensional
vector space $E$ over a complete ultrametric field~$(\K,|.|)$.
We use the same symbol, $|.|$, for the unique
extension of $|.|$ to an absolute
value on an algebraic closure $\wb{\K}$
of $\K$ (see \cite[Theorem 16.1]{Sch}).
We write $R(\alpha)\sub [0,\infty[$ for
the set of all $|\lambda|$
such that
$\lambda\in \wb{\K}$ is an eigenvalue
of $\alpha\tensor_\K\id_{\wb{\K}}$.\\[3.5mm]
The above definition of hyperbolicity
is a good basis for theorems,
but may be difficult to verify
directly.
Fortunately,
in the finite-dimensional case,
an easier (and more concrete)
description of hyperbolicity
can be obtained.
Also, the existence
of centre subspaces and centre-stable subspaces
is automatic:\\[3.5mm]
{\bf Theorem A.}
\emph{Let $\alpha\colon E\to E$ be a linear
self-map of a
finite-dimensional
vector space $E$ over a complete ultrametric field~$\K$.
Then $E$ admits
an $a$-centre-stable subspace and
an $a$-centre subspace,
for each $a\in\;]0,\infty[$.
Moreover,
$\alpha$ is $a$-hyperbolic
if and only if $a\not\in R(\alpha)$.}\\[3.5mm]
Let $M$ be an analytic manifold modelled on an ultrametric Banach
space $E$ over $\K$ (as in \cite{Bo1}).
An analytic diffeomorphism $\kappa\colon U\to V$ from an
open set $U\sub M$ onto an open set $V\sub E$ is called a \emph{chart}
for~$M$.
An analytic map $f\colon N\to M$
between analytic manifolds
is called an \emph{immersion}
if, for each $x\in N$,
the tangent map $T_x(f)\colon T_x(N)\to T_{f(x)}(M)$
is a homeomorphism onto its image
$\im T_x(f)$, and $\im T_x(f)$
is complemented in $T_{f(x)}(M)$ as
a topological vector space.
If $M$ and $N$ have finite dimension,
this simply means that $T_x(f)$ is injective
for each $x\in N$.
An analytic manifold $N$ is called an \emph{immersed submanifold}
of $M$ if $N\sub M$ as a set and the inclusion map $\iota\colon
N\to M$ is an immersion.
For $x\in N$, we
identify $T_x(N)$ with the vector subspace $\im T_x(\iota)$
of~$T_x(M)$.\\[2.5mm]
As before, let $M$ be an analytic manifold
modelled on an ultrametric Banach spaces $E$ over a
complete ultrametric field $(\K,|.|)$.
Let $f\colon M\to M$
be an analytic diffeomorphism,
and $p\in M$ be a fixed point of $f$.\\[3.5mm]
%
%
%
{\bf Definition.}
Given $a\in \;]0,1]$, we define $W_a^\obs(f,p)\sub M$,
the \emph{$a$-stable set} around $p$ with respect to $f$,
as the set of all $x\in M$ such that
%
\begin{equation}\label{dewaseq}
\mbox{$f^n(x)\to p\;$  as $\;n\to\infty\;$
and $\; a^{-n}\|\kappa(f^n(x))\|\to 0\, $,}
\end{equation}
for some (and hence every) chart $\kappa\colon U\to V\sub E$
of $M$ with $p\in U$ such that $\kappa(p)=0$,
and some (and hence every) ultrametric norm $\|.\|$ on $E$
defining its topology.\footnote{See \cite[Remark 6.5]{EXP}
for the independence of the choice of $\kappa$ and $\|.\|$.}\\[3.5mm]
It is clear from the definition that $W_a^\obs:=W_a^\obs(f,p)$
is stable under $f$, i.e., $f(W_a^\obs)=W_a^\obs$.
If the tangent map $T_p(f)\colon T_p(M)\to T_p(M)$
is $a$-hyperbolic
(which can be checked using Theorem~A),
then $W_a^\obs$ is an analytic manifold,
the \emph{$a$-stable manifold}
around $p$ with respect to $f$:\\[3.5mm]
{\bf Ultrametric Stable Manifold Theorem} (cf.\ \cite[Theorem 1.3]{EXP}).
\emph{Let $M$ be an analytic manifold
modelled on an ultrametric Banach space over
a complete ultrametric field $\K$.
Let $f\colon M\to M$ be an analytic diffeomorphism,
$p\in M$ be a point fixed by $f$, and
$a\in \;]0,1]$. If the tangent map
$\alpha:=T_p(f)\colon T_p(M)\to T_p(M)$ is $a$-hyperbolic}
(\emph{which is satisfied
if $M$ is finite-dimensional
and $a\not\in R(\alpha)$}),
\emph{then there exists a unique analytic manifold structure
on $W_a^\obs:=W_a^\obs(f,p)$ such that} (a)--(c) \emph{hold}:
\begin{itemize}
\item[\rm(a)]
\emph{$W_a^\obs$ is an immersed submanifold of $M$};
\item[\rm(b)]
\emph{$W_a^\obs$ is tangent to the $a$-stable subspace $T_p(M)_{a, \obs}$}
(\emph{with respect to $T_p(f)$}), \emph{i.e.,
$T_p(W_a^\obs)= T_p(M)_{a,\obs}$};
\item[\rm(c)]
\emph{$f$ restricts to an analytic diffeomorphism $W_a^\obs\to W_a^\obs$.}
\end{itemize}
\emph{Moreover, each neighbourhood of $p$ in $W_a^\obs$ contains
an open neighbourhood $\Omega$ of $p$ in $W_a^\obs$
which is a submanifold of $M$,
is $f$-invariant $($i.e., $f(\Omega)\sub \Omega)$,
and such that
$W_a^\obs =\bigcup_{n=0}^\infty f^{-n}(\Omega)$.}\\[3.5mm]
If $T_p(f)$ is hyperbolic, then $W_1^\obs$ is simply called the \emph{stable manifold}
around~$p$, and denoted~$W^\obs$.\\[2.5mm]
Now consider the following local situation:\\[2.5mm]
Let $M$
be an analytic manifold modelled on an ultrametric Banach space
over a complete ultrametric field~$\K$.
Let $M_0\sub M$ be open,
$f\colon M_0 \to M$ be an analytic mapping,
$p\in M_0$ be a fixed point of $f$,
and $a\in \;]0,1]$. The following
four definitions are taken from~\cite{EXP}.\\[2.5mm]
{\bf Definition.}
If $T_p(M)$ has an $a$-centre-stable subspace $T_p(M)_{a,\obcs}$ with
respect to $T_p(f)$,
we call an immersed submanifold $N\sub M_0$
an \emph{$a$-centre-stable manifold} around~$p$ with respect to~$f$
if (a)--(d) are satisfied:
\begin{itemize}
\item[(a)]
$p\in N$;
\item[(b)]
$N$ is tangent to
$T_p(M)_{a,\obcs}$ at $p$, i.e.,
$T_p(N)=T_p(M)_{a,\obcs}$;
\item[\rm(c)]
$f(N)\sub N$; and
\item[\rm(d)]
$f|_N\colon N\to N$ is analytic.
\end{itemize}
If $a=1$, we simply speak of a \emph{centre-stable manifold}.\\[2.5mm]
{\bf Definition.}
If $T_p(f)$ is an automorphism
and $T_p(M)$ has a centre subspace $T_p(M)_\obc$
with respect to $T_p(f)$,
we say that an immersed submanifold $N\sub M_0$
is a \emph{centre manifold} around $p$ with respect to $f$
if (a), (c) and (d) from the preceding definition hold as well as
\begin{itemize}
\item[(b)$'$]
$N$ is tangent to
$T_p(M)_\obc$ at~$p$, i.e.,
$T_p(N)=T_p(M)_\obc$.
\end{itemize}
{\bf Definition.}
In the situation above, assume that $T_p(f)$ is $a$-hyperbolic.
An immersed submanifold $N\sub M_0$
is called a \emph{local $a$-stable manifold} around~$p$ with respect to~$f$
if (a), (c) and (d) just stated are satisfied
as well as
\begin{itemize}
\item[(b)$''$]
$N$ is tangent at $p$ to
the $a$-stable subspace $T_p(M)_{a,\obs}$ with respect to $T_p(f)$, i.e.,
$T_p(N)=T_p(M)_{a,\obs}$.
\end{itemize}
If $a=1$, we simply speak of a \emph{local stable manifold}.\\[2.5mm]
{\bf Definition.}
In the situation above, assume that $T_p(f)$ is $a$-hyperbolic.
An immersed submanifold $N\sub M_0$
is called a \emph{local $a$-unstable manifold} around~$p$ with respect to~$f$
if
\begin{itemize}
\item[(a)]
$p\in N$;
\item[(b)]
$N$ is tangent at $p$ to
the $a$-unstable subspace $T_p(M)_{a,\obu}$ with respect to $T_p(f)$, i.e.,
$T_p(N)=T_p(M)_{a,\obu}$;
\item[(c)]
There exists an open neighbourhood $U$ of $p$ in $N$
such that $f(U)\sub N$ and $f|_U\colon U\to N$ is analytic.
\end{itemize}
%
%
Combining Theorem~A with
\cite[Theorems 1.9, 1.10, 6.6 and 8.3]{EXP}
(which contain further information),
we obtain in the finite-dimensional
case:\\[3.5mm]
{\bf Local Invariant Manifold Theorem.}
\emph{Let $M$ be a finite-dimensional\linebreak
analytic manifold
over a complete ultrametric field $\K$,
$M_0\sub M$ be an open\linebreak
subset,
$f\colon M_0\to M$ be an analytic map
and $p\in M_0$ a point fixed by $f$.\linebreak
If $a\in \;]0,1]$, then} (a)--(c) \emph{hold}:
\begin{itemize}
\item[(a)]
\emph{There exists an $a$-centre-stable manifold
$N$ around $p$ with
respect to $f$,
such that $N$ is a submanifold of $M$};
\item[(b)]
\emph{If $\alpha:=T_p(f)$ is an automorphism,
then there exists an $a$-centre manifold $N$
around $p$ with respect to $f$ which is a submanifold of $M$,
such that $f(N)=N$};
\item[(c)]
\emph{If $a\not\in R(\alpha)$,
then there exists a local $a$-stable manifold $N$ around $p$
with respect to $f$,
which is a submanifold of $M$}.
\end{itemize}
\emph{For $a\geq 1$, we have}:
\begin{itemize}
\item[(d)]
\emph{If $a\not\in R(\alpha)$,
then there exists a local $a$-unstable manifold $N$ around $p$
with respect to $f$,
which is a submanifold of $M$}.
\end{itemize}
\emph{In all of} (a)--(d), \emph{the germ of $N$
at $p$} (\emph{as an analytic manifold})
\emph{is uniquely determined.
Moreover, there is a basis of open neighbourhoods
$N'$ of $p$ in $N$ such that $N'$ has the property of $N$
described in} (a)--(d), \emph{respectively.}\\[3.5mm]
If $\alpha:=T_p(f)\colon T_p(M)\to T_p(M)$ is an automorphism
in the preceding situation,
then properties of the spectrum of~$\alpha$
and properties of the fixed point~$p$ of~$f$
can be related.
The next theorem collects results of this type
from Propositions \ref{sin}, \ref{typeR} and
\ref{unicon}.
We say that a fixed point $p\in M_0$ of $f\colon M_0\to M$
is \emph{uniformly attractive} if
each neighbourhood
of $p$ in $M_0$ contains a neighbourhood~$Q$ of~$p$ in~$M_0$
such that $f(Q)\sub Q$ and
\[
\lim_{n\to\infty}f^n(x)=p\quad\mbox{for all $\,x\in Q$}
\]
(cf.\ Definition~\ref{typesfp}).\\[3.5mm]
{\bf Theorem B.}
\emph{Let $M$ be a finite-dimensional
analytic manifold
over a complete ultrametric field $\K$,
$M_0\sub M$ be an open
subset,
$f\colon M_0\to M$ be an analytic map
and $p\in M_0$ a fixed point of $f$ such that $\alpha:=T_p(f)$
is an automorphism.
Then} (a)--(c) \emph{hold}:
\begin{itemize}
\item[\rm(a)]
\emph{$R(\alpha)\sub\;]0,1]$
if and only if each neighbourhood
$P$ of $p$ in $M_0$ contains a neighbourhood $Q$
of $p$ such that $f(Q)\sub Q$};
\item[\rm(b)]
\emph{$R(\alpha)\sub \{1\}$ if and only if each
each
neighbourhood $P$ of $p$ in $M_0$ contains a neighbourhood $Q$
of $p$ such that $f(Q)=Q$};
\item[\rm(c)]
\emph{$R(\alpha)\sub\;]0,1[$
if and only if $p$ is a uniformly attractive fixed point
of~$f$.}
\end{itemize}
In the $1$-dimensional case,
fixed (and periodic) points
were already classified into attractive, repelling and indifferent ones
in~\cite{Khr}.
Results concerning attractive and repelling fixed points,
as well as Siegel disks were also obtained in
\cite{Aga},
which amount to the sufficiency (but not the necessity)
of the spectral condition in (b) and (c)
of Theorem~B.\\[2.5mm]
It is useful to have conditions
ensuring that the (global) stable manifold $W^\obs$
is not only an immersed submanifold,
but a submanifold.
In view of Theorem~A,
our Proposition~\ref{Bangd}
below subsumes the following:\\[3.5mm]
{\bf Theorem C.}
\emph{Let $M$ be a finite-dimensional
analytic manifold over a complete ultrametric field.
Let $p\in M$ be a fixed point of
an analytic diffeomorphism
$f\colon M\to M$,
and $\alpha:=T_p(f)$.
If $R(\alpha)\sub \,]0,1]$,
then $W_a^\obs (f,p)$ is a submanifold
of~$M$, for each $a\in \;]0,1]$
such that $T_p(f)$ is $a$-hyperbolic.}\\[3.5mm]
If $\beta \colon G\to G$
is an automorphism of a finite-dimensional
analytic Lie group
$G$ over a complete ultrametric field,
then the neutral element $e\in G$ is a fixed
point for $\beta$,
but we cannot expect in general
that $T_e(\beta)$ is hyperbolic.
Nonetheless, it is always possible
to turn the stable set
\[
U_\beta:=W^{\obs}(\beta,e):=\{x\in G\colon \lim_{n\to\infty}\beta^n(x)=e\}
\]
(the so-called \emph{contraction group}) into
a manifold:\\[3.5mm]
{\bf Theorem D.}
\emph{If $\beta\colon G\to G$
is an automorphism of
a finite-dimensional
analytic Lie group $G$ over a complete ultrametric field,
then there is a unique
immersed submanifold structure on
$U_\beta =W^\obs(\beta,e)$
such that
conditions} (a)--(c)
\emph{of the Ultrametric Stable Manifold Theorem}
(\emph{with $\beta$ in place of $f$})
\emph{are satisfied.
This immersed submanifold structure
makes $U_\beta$
an immersed Lie subgroup
of~$G$.}\\[3.5mm]
To explain the motivation for the current article,
and to show the utility of its results,
we now briefly describe three Lie-theoretic applications which are
only available through the use of invariant manifolds.\\[3mm]
{\bf Applications in Lie theory.}
Let $G$
be an analytic finite-dimensional Lie group over a local field $\K$
and $\beta\colon G\to G$ be
an analytic automorphism.
The \emph{Levi factor} of $\beta$ is the subgroup
\[
M_\beta:=\{x\in G\colon \mbox{$\beta^\Z(x)$ is relatively compact in $G$} \},
\]
where $\beta^\Z(x):=\{\beta^n(x)\colon n\in \Z\}$ (see \cite{BaW}).
Using invariant manifolds,
one can prove the following results
in arbitrary characteristic
(the $p$-adic case of which is due to J.\,S.\,P. Wang \cite{Wan}):
\begin{itemize}
\item[(a)]
\emph{The group $U_\beta$ is always nilpotent}
(see \cite[Theorem B]{MaZ}).
\item[(b)]
\emph{If $U_\beta$ is closed, then $U_\beta$, $U_{\beta^{-1}}$
and $M_\beta$ are Lie subgroups of $G$.
Moreover, $U_\beta M_\beta U_{\beta^{-1}}$ is an open subset
of $G$ and the ``product map''}
\[
\pi\colon U_\beta\times M_\beta \times U_{\beta^{-1}}\to
U_\beta M_\beta U_{\beta^{-1}}\,,\quad (x,y,z)\mto xyz
\]
\emph{is an analytic diffeomorphism}
(see \cite{SPO}).
\end{itemize}
In fact, the $a_j$-stable manifolds $G_j:=W_{a_j}^\obs(\beta,e)$
provide a central series
$\{1\}=G_1\sub G_2\sub\cdots \sub G_n=G$
of Lie subgroups of $G$,
for suitable real numbers
$0<a_1<\cdots < a_n<1$ (see \cite{MaZ}).
And to get (b),
one heavily uses
the (stable) manifold structures on
$U_\beta=W^\obs(\beta,e)$ and
$U_{\beta^{-1}}=W^\obs(\beta^{-1},e)$ discussed here,
and the fact that $M_\beta$ contains a centre manifold
for $\beta$ around~$e$ (see \cite{SPO};
the result was also
announced with a sketch of proof
in \cite[Theorem 9.1]{SUR}).
\begin{itemize}
\item[(c)] Using (b) as a tool,
it is also possible to calculate the
``scale'' $s(\beta)$ (introduced in \cite{Wi1}, \cite{Wi2})\footnote{The scale can be defined
as the minimum index
$s(\beta):=\min_V [V:V\cap \beta^{-1}(V)]$,
for $V$ ranging through the set of all
compact, open subgroups of~$G$.}
if $U_\beta$ is closed,
in terms of the eigenvalues
of the tangent map $L(\beta):=T_e(\beta)$
(see \cite{SPO}; cf.\ \cite[Theorem 9.3]{SUR}
for a more detailed announcement with a sketch of proof).
Previously, this was only possible
in the $p$-adic case
(see \cite{SCA}; cf.\ also \cite{BaW}
for the scale of inner automorphisms
of reductive algebraic groups).
\end{itemize}
\emph{Structure of the article.}
We first provide notation, basic facts and further
definitions of invariant vector subspaces
in a preparatory section (Section~1).
Sections 2--6 are devoted to the proofs of Theorems
A--D, and related results.
\section{Preliminaries and notation}\label{secprepa}
In this section, we fix notation
and recall some basic facts.
We also define (and briefly discuss)
centre subspaces and centre-stable
subspaces.\\[2.5mm]
In this article,
$\N:=\{1,2,\ldots\}$ and $\N_0:=\N\cup\{0\}$. We write
$\Z$
for the integers
and $\R$ for the field of real numbers.
If $f\colon M\to M$ and $n\in \N$,
we write $f^n:=f\circ\, \cdots\, \circ f$
for the $n$-fold composition,
and $f^0:=\id_M$. If $f$ is invertible,
we define $f^{-n}:=(f^{-1})^n$.\\[2.5mm]
Recall that an \emph{ultrametric field}
is a field $\K$, together with
an absolute value $|.|\colon \K\to [0,\infty[$
which satisfies the ultrametric inequality.
We shall always assume that the metric $d\colon \K \times \K \to [0,\infty[$,
$d(x,y):=|x-y|$, defines a non-discrete
topology on $\K$. If the metric space
$(\K,d)$ is complete,
then the ultrametric field $(\K,d)$ is called \emph{complete}.
A totally disconnected, locally compact, non-discrete
topological field is called a \emph{local field}.
Any such admits an ultrametric absolute value
making it a complete ultrametric field \cite{Wei}.
See, e.g., \cite{Sch} for background concerning
complete ultrametric fields.\\[2.5mm]
An \emph{ultrametric Banach space} over an ultrametric field
$\K$ is a complete normed space $(E,\|.\|)$ over $\K$
whose norm $\|.\| \colon E\to [0,\infty[$ satisfies the \emph{ultrametric inequality},
$\|x+y\|\leq \max\{\|x\|, \|y\|\}$ for all $x,y\in E$
(cf.\ \cite{Roo}).
The ultrametric inequality entails that
%
\begin{equation}\label{domi}
\|x+ y\|= \|x\|\quad\mbox{for all $x,y\in E$ such that $\|y\|<\|x\|$.}
\end{equation}
Given $x\in E$ and $r\in \;]0,\infty]$, we set
$B^E_r(x):=\{y\in E\colon \|y-x\|<r\}$.\\[2.5mm]
If $A\colon E\to F$ is a continuous linear map
between ultrametric Banach spaces $(E,\|.\|_E)$ and $(F,\|.\|_F)$,
we write $\|A\|:=\sup\{\|Ax\|_F/\|x\|_E\colon 0\not= x \in E\}$
for its operator norm.
The following observation is immediate.
\begin{numba}\label{expafac}
If $(E,\|.\|)$ is an ultrametric Banach space over
$\K$ and $A\colon E\to E$ an invertible
continuous linear map, then
$\frac{1}{\|A^{-1}\|}$
can be interpreted as an expansion factor,
in the sense that $\|A y\|\geq \frac{1}{\|A^{-1}\|}\|y\|$ for all $y\in E$
(as in the familiar case of real Banach spaces).
\end{numba}
We refer to \cite{Bo1} for the concept of an analytic map
$f\colon U\to F$, where $(E,\|.\|_E)$ and $(F,\|.\|_F)$ are ultrametric Banach
spaces and $U$ is an open subset of~$E$;
compare~\cite{Ser} if $E$ and $F$ have finite dimension.
Thus, in the terminology of Non-Archimedean Geometry,
the mappings we consider are \emph{locally}
analytic\linebreak
maps.
If $f$ is as before and $x\in U$, we write $f'(x)\colon E\to F$
for the total differential of $f$ at $x$.
We shall use that $f$ is \emph{strictly}
differentiable at~$x$~(see~\cite{Bo1}):
%
%
%
\begin{numba}\label{remstrict}
If $f\colon E\supseteq U\to F$ is analytic and $x\in U$,
write
\begin{equation}\label{raute}
f(y)=f(x)+f'(x).(y-x)+R(y)\quad\mbox{for $\, y\in U$.}
\end{equation}
Then $R|_{B^E_\ve(x)}$ is Lipschitz for small $\ve>0$
in the sense that
\[
\Lip(R|_{B^E_\ve(x)}):=\sup\left\{\frac{\|R(z)-R(y)\|_F}{\|z-y\|_E}
\colon y\not= z\in B^E_\ve(x)\right\}<\infty,
\]
and
\[
\lim_{\ve\to0}\Lip(R|_{B^E_\ve(x)})=0.
\]
If $E=F$ and $f'(x)$ is an automorphism,
then
\[
\Lip(R|_{B^E_\ve(x)})<\frac{1}{\|f'(x)^{-1}\|}
\]
for $\ve>0$ small enough.
Hence, by (\ref{domi}) and (\ref{raute}),
for all $y,z\in B^E_\ve(x)$ we have
\begin{equation}\label{flower}
\|f(z)-f(y)\|=\|f'(x)(z-y)+R(z)-R(y)\|=\|f'(x).(z-y)\|.
\end{equation}
\end{numba}
An \emph{analytic manifold} modelled on an
ultrametric Banach space $E$ over a complete ultrametric field $\K$ is defined
as usual (as a Hausdorff topological
space~$M$, together with a (maximal) set $\cA$ of homeomorphisms (``charts'')
$\phi\colon U_\phi\to V_\phi$
from open subsets of~$M$
onto open subsets of~$E$,
such that $M=\bigcup_{\phi\in \cA}U_\phi$
and the mappings
$\phi\circ \psi^{-1}$
are analytic for all $\phi,\psi\in \cA$).
Also the tangent space $T_pM$ of $M$ at $p\in M$,
the tangent bundle $TM$,
analytic maps $f\colon M\to N$ between analytic manifolds,
and the tangent maps $T_pf\colon T_pM\to T_{f(p)}N$
as well as $Tf\colon TM\to TN$
can be defined as usual (cf.\ \cite{Bo1}).
If $f\colon M\to V$ is an analytic map to an open subset
$V$ of an ultrametric Banach space $F$,
then we identify $TV$ with $V\times F$ in the natural way
and let $df\colon TM\to F$ be the second component of the map
$Tf\colon M\to V\times F$.
An \emph{analytic Lie group}~$G$ over $\K$
is a group, equipped with an analytic manifold structure
modelled on an ultrametric
Banach space over $\K$,
such that the group inversion and group
multiplication are analytic (cf.\ \cite{Bo2}).
As usual, we write $L(G):=T_e(G)$
and $L(\beta):=T_e(\beta)$, if $\beta\colon G\to H$
is an analytic homomorphism between analytic Lie groups.
Let $M$ be an analytic manifold modelled on an ultrametric Banach space~$E$.
A subset $N\sub M$ is called a \emph{submanifold} of
$M$ if there exists a complemented vector subspace $F$ of the modelling space
of $M$ such that
each point $p\in N$ is contained in
the domain~$U$ of some chart $\phi\colon U\to V$ of $M$
such that $\phi(N\cap U)=F\cap V$.
By contrast, an analytic manifold $N$ is called an \emph{immersed submanifold} of $M$
if $N\sub M$ as a set and the inclusion map $\iota \colon N\to M$
is an immersion.
Subgroups of Lie groups with analogous properties are called
\emph{Lie subgroups}
and \emph{immersed Lie subgroups}, respectively.
If we call a mapping $f$ an analytic diffeomorphism between two manifolds
(or an analytic automorphism of a Lie group),
then also the inverse map $f^{-1}$ is assumed analytic.\\[2.5mm]
Let us now complete the definitions
of invariant vector subspaces from the Introduction.
In the remainder of this section,
let $E$ be an ultrametric Banach space over $\K$.
Let $\alpha\colon E\to E$
be a continuous linear map,
and $a\in \;]0,\infty[$.
\begin{rem}\label{centunique}
We mention that the spaces
$E_{a,\obs}$ and $E_{a,\obu}$
in the definition of $a$-hyperbolicity
stated in the Introduction
are uniquely determined,
in the case of an endomorphism
$\alpha\colon E\to E$
of a finite-dimensional
$\K$-vector space~$E$.
See \cite[Remark~6.4]{EXP}
for the assertion if
$\alpha$ is an automorphism.
In the general case,
the argument in the cited remark still
provides uniqueness of $E_{a,\obs}$.
Let us write $E^+:=\bigcap_{k\in \N}\alpha^k(E)$
for the Fitting one component of $E$
(see, e.g., \cite[Lemma 5.3.11]{HaN}).
Then $\alpha$ restricts to an automorphism $\beta$
of $E^+$. Now
$E^+=(E_{a,\obs})^+ \oplus E_{a,\obu}$
is a decomposition for the $a$-hyperbolic automorphism~$\beta$
and thus also $E_{a,\obu}$ is unique.
\end{rem}
%
%
%
\begin{defn}\label{defcsub}
An $\alpha$-invariant vector subspace
$E_{a,\obcs} \sub E$ is called an \emph{$a$-centre-stable
subspace} with respect to $\alpha$
if there exists an $\alpha$-invariant
vector subspace $E_{a,\obu}$ of $E$ such that
$E=E_{a,\obcs} \oplus E_{a,\obu}$ and
$\alpha_2:= \alpha|_{E_{a,\obu}}\colon E_{a,\obu}\to E_{a,\obu}$
is invertible,
and there exists an ultrametric norm $\|.\|$ on $E$ defining its topology,
with the following properties:
\begin{itemize}
\item[(a)]
$\|x+y\|=\max\{\|x\|,\|y\|\}$
for all $x\in E_{a,\obcs}$, $y\in E_{a,\obu}$; and
\item[(b)]
$\|\alpha_1\|\leq a$ and $\frac{1}{\|\alpha_2^{-1}\|}>a$
holds for the operator norms with respect to $\|.\|$,
where $\alpha_1:=\alpha|_{E_{a,\obcs}}$.
\end{itemize}
Then $E_{a,\obcs}$ is uniquely determined
and if $\alpha$ is invertible, then $E_{a,\obu}$ is unique
(see \cite[Remark 3.3]{EXP}).
Arguing as in Remark~\ref{centunique},
we see that
$E_{a,\obu}$ is also unique if $E$
is finite-dimensional.
\end{defn}
%
\begin{defn}\label{defacentre}
We say that an $\alpha$-invariant vector subspace
$E_{a,\obc} \sub E$ is an \emph{$a$-centre
subspace} with respect to $\alpha$
if there exist $\alpha$-invariant
vector subspaces $E_{a,\obs}$ and $E_{a,\obu}$ of $E$ such that
$E=E_{a,\obs}\oplus E_{a,\obc} \oplus E_{a,\obu}$,
and an ultrametric norm $\|.\|$ on $E$ defining its topology,
with the following properties:
\begin{itemize}
\item[(a)]
$\|x+y+z \|=\max\{\|x\|,\|y\|,\|z\|\}$
for all $x\in E_{a,\obs}$, $y\in E_{a,\obc}$  and $z\in E_{a,\obu}$;
\item[(b)]
$\|\alpha(x)\|=a\|x\|$ for all $x\in E_{a,\obc}$;
\item[(c)]
$\alpha_3:=\alpha|_{E_{a,\obu}}$
is invertible;\footnote{This hypothesis
can be omitted (as it then follows from the others) if $E$ has finite dimension
(since $\ker\alpha\sub E_{a,\obs}$)
or $\alpha$ is an automorphism.}
and
\item[(d)]
$\|\alpha_1\|<a$ and $\frac{1}{\|\alpha_3^{-1}\|}>a$ hold for
the operator norms with respect
to $\|.\|$, where $\alpha_1:=\alpha|_{E_{a,s}}$.
\end{itemize}
If $\alpha$ is an automorphism,
then
$E_{a,\obs}$, $E_{a,\obc}$ and
$E_{a,\obu}$
are uniquely determined (see \cite[Remark 4.3]{EXP}).
If $E$ is finite-dimensional, then $E_{a,\obs}$ is unique by its description
in \cite[Remark 4.3]{EXP},
and hence also $E_{a,\obc}$ and $E_{a,\obu}$
are unique by the argument from Remark~\ref{centunique}.
$E_{a,\obs}$ and $E_{a,\obu}$ are
called the \emph{$a$-stable}
and \emph{$a$-unstable} subspaces of $E$ with respect to~$\alpha$, respectively.
If $a=1$, we simply speak of stable, centre and unstable
subspaces, and write~$E_\obs$, $E_\obc$ and
$E_{\obu}$ instead of
$E_{1,\obs}$, $E_{1,\obc}$ and~$E_{1,\obu}$.
\end{defn}
\section{Spectral interpretation of hyperbolicity}\label{secfin}
%
In this section, we consider
the special case where~$\alpha$ is an automorphism
of a \emph{finite-dimensional} vector space
over a complete ultrametric field
$(\K,|.|)$.
We shall interpret $a$-hyperbolicity
as the absence of eigenvalues
of absolute value $a$ (in an
algebraic closure of $\K$).
Moreover, we shall see that an $a$-centre subspace
and an $a$-centre-stable subspace always exist.
%
%
\begin{numba}\label{findiset}
Let $(\K,|.|)$ be a complete ultrametric
field, $E$ be a finite-dimensional $\K$-vector
space, and $\alpha\colon E\to E$
be a linear map.
We define $\wb{\K}$,
the extension $|.|$ and $R(\alpha)$
as in the Introduction,
using
the $\wb{\K}$-linear self-map
$\alpha_{\wb{\K}}:=\alpha\otimes \id_{\wb{\K}}$
of the $\wb{\K}$-vector space
$E_{\wb{\K}}:=E\tensor_\K \wb{\K}$
obtained from~$E$ by extension of scalars.
For each $\lambda\in \wb{\K}$,
we let
\[
(E_{\wb{\K}})_{(\lambda)}\, :=\, \{ x\in E_{\wb{\K}}\!:
(\alpha_{\wb{\K}}-\lambda)^dx=0\}
\]
be the generalized eigenspace of $\alpha_{\wb{\K}}$
in $E_{\wb{\K}}$ corresponding to~$\lambda$
(where $d$ is the dimension of the $\K$-vector space~$E$).
Given $\rho\in [0,\infty[$,
we define
\begin{equation}\label{dfspacerho}
(E_{\wb{\K}})_\rho\; :=\;
\bigoplus_{|\lambda|=\rho} (E_{\wb{\K}})_{(\lambda)}\,\sub\, E_{\wb{\K}} \, ,\vspace{-.7mm}
\end{equation}
where the sum is taken over all
$\lambda\in \wb{\K}$
such that $|\lambda|=\rho$.
As usual, we identify $E$ with $E\tensor 1\sub E_{\wb{\K}}$.
\end{numba}
The following fact (cf.\ (1.0) on p.\,81 in \cite[Chapter~II]{Mar})
is important:\footnote{In \cite[p.\,81]{Mar},
$\K$ is a local field,
but the proof works also for complete ultrametric fields.}
\begin{la}
For each $\rho\in R(\alpha)$,
the vector
subspace $(E_{\wb{\K}})_\rho$ of $E_{\wb{\K}}$ is defined
over~$\K$, i.e.,
$(E_{\wb{\K}})_\rho= (E_\rho)_{\wb{\K}}$
with $E_\rho:=(E_{\wb{\K}})_\rho\cap E$.
Thus
\begin{equation}\label{isdsum}
E\; =\; \bigoplus_{\rho\in R(\alpha)} E_\rho\,,
\end{equation}
and each $E_\rho$
is an $\alpha$-invariant vector subspace of~$E$.\,\Punkt
\end{la}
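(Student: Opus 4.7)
The plan is to reduce the claim to primary decomposition of $\alpha$ over the ground field~$\K$. I would first factor the minimal polynomial $\mu(X)\in\K[X]$ of $\alpha$ as $\mu(X)=\prod_{j=1}^r q_j(X)^{e_j}$ with the $q_j$ pairwise distinct monic irreducibles in $\K[X]$. Since the ideals $(q_j(X)^{e_j})$ are pairwise coprime in $\K[X]$, the Chinese Remainder Theorem provides an $\alpha$-invariant direct-sum decomposition $E=\bigoplus_{j=1}^r V_j$ with $V_j:=\ker q_j(\alpha)^{e_j}$.

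The key auxiliary lemma is that all roots in $\wb{\K}$ of an irreducible polynomial $q_j\in\K[X]$ have one and the same absolute value, which I will denote by $\rho_j$. For this I would invoke the uniqueness of the extension of $|.|$ from~$\K$ to any algebraic extension (\cite[Theorem~16.1]{Sch}): if $\lambda,\lambda'\in\wb{\K}$ are two roots of $q_j$, there is a $\K$-algebra isomorphism $\K(\lambda)\to \K(\lambda')$ sending $\lambda$ to $\lambda'$; pulling back $|.|$ from $\K(\lambda')$ through this isomorphism yields an absolute value on $\K(\lambda)$ extending $|.|$ on $\K$, which by uniqueness must equal the restriction of $|.|$ from~$\wb{\K}$. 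Hence $|\lambda|=|\lambda'|$. (The purely inseparable situation, where $q_j$ has only one root, is trivial.)

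Passing to $\wb{\K}$, the generalized eigenspace decomposition of $\alpha_{\wb{\K}}$ refines each $V_j\tensor_\K\wb{\K}$ into $\bigoplus_{\lambda:\,q_j(\lambda)=0}(E_{\wb{\K}})_{(\lambda)}$, and all contributing eigenvalues $\lambda$ satisfy $|\lambda|=\rho_j$ by the lemma. Setting $E_\rho:=\bigoplus_{j:\,\rho_j=\rho}V_j$ for each $\rho\in R(\alpha)$ therefore produces an $\alpha$-invariant $\K$-subspace with $E_\rho\tensor_\K\wb{\K}=(E_{\wb{\K}})_\rho$, and intersecting with $E$ gives $E_\rho=(E_{\wb{\K}})_\rho\cap E$ as required. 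The disjoint partition $\{1,\dots,r\}=\bigsqcup_{\rho\in R(\alpha)}\{j:\rho_j=\rho\}$ then turns the primary decomposition $E=\bigoplus_j V_j$ into the asserted identity (\ref{isdsum}).

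The only substantive obstacle is the ``conjugate roots have equal absolute value'' lemma; once that is available, everything else is routine linear algebra over~$\K$ together with the compatibility of primary decomposition with extension of scalars.
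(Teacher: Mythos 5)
Your proof is correct. Note that the paper itself gives no argument for this lemma: it is stated with a reference to (1.0) on p.\,81 of Margulis's book (plus a footnote that the argument there carries over from local fields to complete ultrametric fields), so there is no in-text proof to compare against; your write-up is essentially the standard argument that citation points to. The two ingredients you isolate are exactly the right ones: the primary decomposition $E=\bigoplus_j\ker q_j(\alpha)^{e_j}$ over $\K$, and the fact that conjugate roots of an irreducible $q_j\in\K[X]$ share a common absolute value, which you correctly derive from the uniqueness of the extension of $|.|$ to algebraic extensions of a complete ultrametric field (\cite[Theorem 16.1]{Sch}). The remaining identifications are routine, as you say; the only point worth making explicit is that $V_j\tensor_\K\wb{\K}=\ker q_j(\alpha_{\wb{\K}})^{e_j}$ really is the full sum of the generalized eigenspaces $(E_{\wb{\K}})_{(\lambda)}$ over the roots $\lambda$ of $q_j$, which follows because the minimal polynomial is unchanged under extension of scalars (so the exponent $e_j m_\lambda$ already exhausts $\ker(\alpha_{\wb{\K}}-\lambda)^d$). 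A pleasant feature of your route is that it avoids Galois descent entirely and so works uniformly in the inseparable case.
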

It is essential for us that certain well-behaved norms
exist on~$E$ (as in~\ref{findiset}).
%
%
\begin{defn}\label{defnadpt}
A norm $\|.\|$
on $E$ is \emph{adapted to $\alpha$} if the following holds:
\begin{itemize}
\item[(a)]
$\|.\|$ is ultrametric;
\item[(b)]
$\big\|\sum_{\rho\in R(\alpha)} x_\rho\big\|=\max\{\|x_\rho\|
\colon \rho\in R(\alpha)\}$\vspace{.5mm}
for each $(x_\rho)_{\rho\in R(\alpha)}
\in \prod_{\rho\in R(\alpha)} E_\rho$; and
\item[(c)]
$\|\alpha (x)\|=\rho\|x\|$ for each $0\not= \rho\in R(\alpha)$
and $x\in E_\rho$.
\end{itemize}
\end{defn}
%
%
\begin{prop}\label{propadapt}
Let $E$ be a finite-dimensional vector space over
a complete ultrametric field $(\K,|.|)$ and
$\alpha\colon E\to E$ be a linear map.
Let $\ve>0$
and $E_0:=\{x\in E\colon (\exists n\in \N)\;\alpha^n(x)=0\}$.
Then $E$ admits a norm $\|.\|$ adapted to~$\alpha$,
such that $\alpha|_{E_0}$ has operator norm
$<\ve$ with respect to $\|.\|$.
\end{prop}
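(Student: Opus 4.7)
The plan is to construct $\|.\|$ summand by summand on the decomposition $E=\bigoplus_{\rho\in R(\alpha)}E_\rho$ from (\ref{isdsum}), and then take the maximum of the components with respect to this splitting. The resulting max-norm is automatically ultrametric and verifies (a) and (b) of Definition~\ref{defnadpt}, reducing the task to: for each $\rho>0$ in $R(\alpha)$, produce an ultrametric $\K$-norm on $E_\rho$ with $\|\alpha(x)\|=\rho\|x\|$, and, if $0\in R(\alpha)$, produce an ultrametric norm on $E_0$ for which $\|\alpha|_{E_0}\|<\ve$.

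For the nilpotent summand $E_0$: since $\alpha|_{E_0}$ is nilpotent, I choose a $\K$-basis $v_1,\dots,v_m$ of $E_0$ adapted to the filtration by kernels of powers of $\alpha|_{E_0}$, so that $\alpha v_i=\sum_{j<i}c_{ij}v_j$. Set $M:=\max_{i,j}|c_{ij}|$ and pick $s\in\K$ with $0<|s|\leq 1$ and $|s|M<\ve$ (possible because $\K$ is non-discrete). Setting $w_i:=s^i v_i$ and declaring the $w_i$ orthonormal, the expansion $\alpha w_i=\sum_{j<i}s^{i-j}c_{ij}w_j$ together with $|s^{i-j}|\leq|s|$ yields $\|\alpha|_{E_0}\|\leq|s|M<\ve$.

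For $\rho>0$: I pass to $\wb{\K}$ and work in $(E_{\wb{\K}})_\rho$, on which $\alpha_{\wb{\K}}$ decomposes into Jordan blocks with eigenvalues $\lambda$ of absolute value $\rho$. For a single block with Jordan basis $v_1,\dots,v_k$ (so $\alpha_{\wb{\K}} v_1=\lambda v_1$ and $\alpha_{\wb{\K}} v_i=\lambda v_i+v_{i-1}$ for $i\geq 2$), fix $t\in\K$ with $0<|t|<\rho$ and set $w_i:=t^{i-1}v_i$, so that $\alpha_{\wb{\K}} w_i=\lambda w_i+tw_{i-1}$. Declaring the $w_i$ orthonormal in each block and taking the maximum across blocks yields a $\wb{\K}$-norm on $(E_{\wb{\K}})_\rho$. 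For $x=\sum_i a_iw_i$ in one block, the coordinates of $\alpha_{\wb{\K}}x$ are $\lambda a_i+t a_{i+1}$ (with $a_{k+1}:=0$); choosing the largest index $i^*$ attaining $\max_j|a_j|$, the inequality $|t a_{i^*+1}|<\rho|a_{i^*}|=|\lambda a_{i^*}|$ combined with the domination rule~(\ref{domi}) forces $|\lambda a_{i^*}+t a_{i^*+1}|=\rho\max_j|a_j|$, so $\|\alpha_{\wb{\K}}x\|=\rho\|x\|$. Restricting this $\wb{\K}$-norm to the $\K$-subspace $E_\rho=(E_{\wb{\K}})_\rho\cap E$ produces the required $\K$-norm on $E_\rho$.

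The only mildly delicate step is the equality $\|\alpha_{\wb{\K}}x\|=\rho\|x\|$ inside a Jordan block, settled by the ultrametric-domination argument above once $|t|<\rho$. The remaining verifications — that the max-norm on the direct sum is ultrametric, that restriction of a $\wb{\K}$-norm to a $\K$-subspace is again an ultrametric $\K$-norm preserving $\alpha$-compatibility, and that the condition on $E_0$ is vacuous when $0\notin R(\alpha)$ — are routine.
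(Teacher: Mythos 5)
Your proposal is correct and follows essentially the same route as the paper: decompose $E=\bigoplus_{\rho\in R(\alpha)}E_\rho$ via (\ref{isdsum}), take the maximum norm across the summands, handle the nilpotent summand $E_0$ by rescaling a triangular basis, and equip each $E_\rho$ with $\rho>0$ with a norm satisfying $\|\alpha(x)\|=\rho\|x\|$. The only difference is that where the paper's Lemma~\ref{normrho} cites \cite[Lemma~4.4]{SUR} for the existence of such a norm on $E_\rho$, you supply a self-contained construction (Jordan blocks over $\wb{\K}$, rescaling by $t$ with $|t|<\rho$, domination via (\ref{domi}), then restriction to the $\K$-form), which is a correct and welcome inlining of that cited lemma.
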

The proof uses the following lemma:
%
\begin{la}\label{normrho}
For each $\rho\in R(\alpha)\setminus\{0\}$,
there exists an ultrametric norm
$\|.\|_\rho$ on $E_\rho$ such that $\|\alpha(x)\|_\rho=\rho\|x\|_\rho$
for each $x\in E_\rho$.\,\Punkt
\end{la}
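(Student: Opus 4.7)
My plan is to produce the desired norm by rescaling a Jordan-type basis over a finite extension of $\K$ and pulling the resulting sup norm back to $E_\rho$. Since $\K$ is complete, $|.|$ extends uniquely to any finite algebraic extension (\cite[Theorem~16.1]{Sch}), so I fix a finite extension $L/\K$ over which the characteristic polynomial of $\alpha|_{E_\rho}$ splits. Set $V:=E_\rho\otimes_\K L$ and extend $\alpha|_{E_\rho}$ to $\alpha_L\colon V\to V$. Then $V=\bigoplus_\lambda V_{(\lambda)}$ is the decomposition into generalized $\alpha_L$-eigenspaces, with $\lambda$ running over the eigenvalues of $\alpha_L$; by construction every such $\lambda$ satisfies $|\lambda|=\rho$. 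Because $\K$ carries a non-discrete topology, it contains a nonzero element $c$ with $|c|<\rho$, which I fix.

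Within each Jordan block (of some eigenvalue $\lambda$ and size $m$) I take a Jordan basis $v_1,\ldots,v_m$ with $\alpha_L v_i=\lambda v_i+v_{i-1}$ (using the convention $v_0:=0$), and rescale it to $w_i:=c^{i-1}v_i$. A short computation then yields
\[
\alpha_L w_i \;=\; \lambda w_i+c\,w_{i-1} \qquad(\text{with }w_0:=0).
\]
Concatenating the rescaled bases from all Jordan blocks across all eigenvalues of absolute value $\rho$ produces a basis $\mathcal{B}$ of $V$, and I define $\|.\|$ on $V$ to be the sup norm with respect to $\mathcal{B}$; this norm is ultrametric by construction.

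The key verification is $\|\alpha_L x\|=\rho\|x\|$ for every $x\in V$. Writing $x=\sum_i x_i w_i$ with $\|x\|=\max_i|x_i|$ and expanding $\alpha_L x$ in $\mathcal{B}$, each coefficient has the form $\lambda x_i+c\,x_{i+1}$, where $x_{i+1}:=0$ whenever $w_i$ sits at the top of its Jordan block. Since $|c|<\rho=|\lambda|$ and $|x_{i+1}|\leq\|x\|$, the ultrametric inequality yields $|\lambda x_i+c\,x_{i+1}|\leq\rho\|x\|$ for every~$i$, so $\|\alpha_L x\|\leq\rho\|x\|$. Conversely, picking any index $i_0$ with $|x_{i_0}|=\|x\|$, one has $|c\,x_{i_0+1}|\leq|c|\,\|x\|<\rho\|x\|=|\lambda x_{i_0}|$, so by the dominance principle~(\ref{domi}) the $i_0$-th coefficient of $\alpha_L x$ has absolute value exactly $\rho\|x\|$. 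Finally, the restriction of $\|.\|$ to the $\K$-subspace $E_\rho\subseteq V$, denoted $\|.\|_\rho$, is the desired ultrametric norm on $E_\rho$, and $\|\alpha x\|_\rho=\rho\|x\|_\rho$ for $x\in E_\rho$ is immediate from the identity just proved on $V$.

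The main obstacle I expect to encounter in writing up the details is simply the bookkeeping across multiple Jordan blocks belonging to different eigenvalues sharing the absolute value $\rho$; the rescaling trick itself is standard non-archimedean folklore, and the passage to a finite extension is forced only because the eigenvalues of $\alpha$ might not already lie in $\K$.
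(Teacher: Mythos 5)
Your proof is correct, but it takes a different route from the paper's: the paper disposes of this lemma in two lines by citing \cite[Lemma~4.4]{SUR} for the case that $\alpha$ is an automorphism, and then reducing the general case to that one via the observation that $\alpha|_{E_\rho}\colon E_\rho\to E_\rho$ is automatically an automorphism (because $\ker\alpha\subseteq E_0$ and $\rho\neq 0$). You instead give a self-contained construction: pass to a finite extension $L$ splitting the characteristic polynomial, rescale a Jordan basis by powers of an element $c\in\K$ with $0<|c|<\rho$ so that $\alpha_L w_i=\lambda w_i+c\,w_{i-1}$, take the sup norm in the rescaled basis, and verify the exact equality $\|\alpha_L x\|=\rho\|x\|$ via the ultrametric inequality (for $\leq$) and the dominance principle (\ref{domi}) (for $\geq$), before restricting to $E_\rho$. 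All the steps check out --- in particular the lower bound correctly isolates an index $i_0$ with $|x_{i_0}|=\|x\|$ and uses $|c\,x_{i_0+1}|<|\lambda x_{i_0}|$, and the restriction of the sup norm to the $\K$-form $E_\rho$ is indeed an ultrametric $\K$-norm because $|.|$ on $L$ extends $|.|$ on $\K$. Your rescaling is exactly the trick the paper itself uses for the nilpotent case $\rho=0$ in Lemma~\ref{rhozero} (there with $w_k=\lambda^k v_k$), and it is presumably also the content of the cited \cite[Lemma~4.4]{SUR}; what your argument buys is self-containedness --- no external citation, and no need for the automorphism reduction, since your construction never uses invertibility of $\alpha$ on all of $E$ --- at the cost of the Jordan-block bookkeeping you anticipate.
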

\begin{proof}
If $\alpha$ is an automorphism, then the assertion holds by
\cite[Lemma~4.4]{SUR}.
The general case follows if we replace $\alpha$
by the map $\alpha|_{E_\rho}\colon E_\rho\to E_\rho$,
which is an automorphism as $\ker(\alpha)\sub E_0$
and thus $E_\rho\cap \ker(\alpha)=\{0\}$.
\end{proof}
The next lemma takes care of the case $\rho=0$.
%
\begin{la}\label{rhozero}
Let $E$ be a finite-dimensional vector space over a complete
ultrametric field $(\K,|.|)$
and $\alpha\colon E\to E$ be a nilpotent
linear map. Let $\ve>0$.
Then there exists an ultrametric norm $\|.\|$ on~$E$
with respect to which $\alpha$ has operator norm $<\ve$.
\end{la}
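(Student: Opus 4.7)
The plan is to put $\alpha$ into strictly upper-triangular form with respect to a suitable basis, and then to use a weighted maximum norm in which the basis vectors are scaled so that every off-diagonal entry of $\alpha$ gets shrunk by a large factor.

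\emph{Step 1 (Filtration and triangular basis).} Since $\alpha$ is nilpotent, the subspaces $E_k:=\ker(\alpha^k)$ form a strictly increasing chain $\{0\}=E_0\subsetneq E_1\subsetneq\cdots\subsetneq E_n=E$ for some $n$, and $\alpha(E_k)\sub E_{k-1}$. Starting from a basis of $E_1$ and successively extending, I obtain a basis $e_1,\ldots,e_d$ of $E$ with the property that $\alpha(e_i)\in\Spann(e_1,\ldots,e_{i-1})$ for each $i$. Writing $\alpha(e_i)=\sum_{j<i}c_{ji}e_j$ with $c_{ji}\in\K$, set $C:=\max\bigl(1,\max_{j<i}|c_{ji}|\bigr)$.

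\emph{Step 2 (Weighted ultrametric norm).} For positive reals $s_1,\ldots,s_d$, define
\[
\Bigl\|\sum_{i=1}^d x_i e_i\Bigr\|:=\max_{1\leq i\leq d}|x_i|\, s_i.
\]
One checks at once that this is an ultrametric norm on $E$ (the properties $\|\lambda x\|=|\lambda|\|x\|$ and $\|x+y\|\leq\max\{\|x\|,\|y\|\}$ are immediate from the corresponding properties of $|.|$). Because all norms on a finite-dimensional vector space over a complete ultrametric field are equivalent (see \cite{Sch}), $\|.\|$ defines the topology of $E$.

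\emph{Step 3 (Operator norm estimate).} For $x=\sum_i x_ie_i$, the definition gives $|x_i|\leq\|x\|/s_i$, and the ultrametric inequality yields
\[
\|\alpha(x)\|=\Bigl\|\sum_i x_i\sum_{j<i}c_{ji}e_j\Bigr\|\leq \max_{j<i}|x_i||c_{ji}|s_j\leq \|x\|\max_{j<i}|c_{ji}|\,\frac{s_j}{s_i}.
\]
Hence $\|\alpha\|\leq\max_{j<i}|c_{ji}|s_j/s_i$.

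\emph{Step 4 (Choice of weights).} Pick any real $\lambda>C/\ve$ and set $s_i:=\lambda^i$. Then for $j<i$,
\[
|c_{ji}|\frac{s_j}{s_i}\leq C\lambda^{j-i}\leq \frac{C}{\lambda}<\ve,
\]
so $\|\alpha\|<\ve$, as required.

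There is no real obstacle here: the only point to be mindful of is that the weighted max-norm really is ultrametric and defines the given topology, both of which are routine in finite dimensions. Note that if one prefers, Jordan normal form for nilpotent operators (which is valid over any field) gives a basis in which $\alpha(e_{j,i})=e_{j,i-1}$ or $0$, and the same weighting $\|e_{j,i}\|=\lambda^i$ then gives $\|\alpha\|\leq 1/\lambda$ directly; the elementary triangular argument above avoids even this.
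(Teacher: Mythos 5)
Your proof is correct, and it is a mild but genuine variant of the paper's argument. Both proofs rest on the same idea: rescale a basis in which $\alpha$ is strictly upper triangular by a geometric sequence of weights, so that every off-diagonal entry is damped. The paper first reduces to a single Jordan block (so the matrix entries are $0$ or $1$), picks a scalar $\lambda\in\K$ with $0<|\lambda|<\ve$, and replaces $v_k$ by $\lambda^k v_k$; the maximum norm in the new basis then gives $\|\alpha\|\leq|\lambda|<\ve$, and the general case is assembled block by block. You avoid the Jordan normal form altogether: the kernel filtration $\ker(\alpha^k)$ already yields a strictly triangularizing basis, and you absorb the (possibly large) entries $c_{ji}$ by taking \emph{real} weights $s_i=\lambda^i$ in a weighted maximum norm. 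This buys a slightly more elementary linear-algebra input (no Jordan decomposition needed) at the cost of having to verify that a real-weighted max norm is ultrametric and defines the topology -- which is fine here, since the paper's notion of ultrametric norm does not require values in $|\K|$ (indeed Lemma~\ref{normrho} already produces such norms). One small point to tidy up: in Step 4 you need $\lambda\geq 1$ as well as $\lambda>C/\ve$, since for $\lambda<1$ and $i-j\geq 2$ the bound $\lambda^{j-i}\leq\lambda^{-1}$ fails; either require $\lambda>\max\{1,C/\ve\}$ or note that one may assume $\ve\leq 1$ without loss of generality. With that adjustment the argument is complete.
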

\begin{proof}
Assume first that there exists a basis $v_1,\ldots, v_m$
of $E$ with respect to which $\alpha$
has Jordan normal form with a single Jordan block,
i.e., $\alpha(v_1)=0$ and $\alpha(v_k)=v_{k-1}$
for $k\in \{2,\ldots, m\}$.
The case $E=\{0\}$ being trivial,
we may assume that $m\geq 1$.
Choose $\lambda\in \K$ such that $0<|\lambda|<\ve$
and define $w_k:=\lambda^k v_k$
for $k\in \{1,\ldots, m\}$.
Then $\alpha(w_k)=\lambda^k v_{k-1}=\lambda w_{k-1}$
for $k\in\{2,\ldots, m\}$ and $\alpha(w_1)=0$,
entailing that $\alpha$ has operator norm
$<\ve$ with respect to the maximum norm $\|.\|$
on $E$ with respect to the basis $w_1,\ldots, w_m$,
\[
\left\|\sum_{k=1}^m t_kw_k\right\|\; :=\;
\max\{|t_k|\colon k=1,\ldots, m\}\quad \mbox{for $\, t_1,\ldots, t_m\in \K$.}
\]
In the general case, we write
$E$ as a direct sum $\bigoplus_{j=1}^nE_j$
of $\alpha$-invariant
vector subspaces $E_j\sub E$
such that the Jordan decomposition of $\alpha|_{E_j}$
has a single Jordan block.
For each $j$,
there exists
an ultrametric norm $\|.\|_j$ on $E_j$
with respect to which $\alpha|_{E_j}$ has operator
norm $<\ve$,
by the above special case.
Then $\alpha$ has operator norm $<\ve$
with respect to the ultrametric norm $\|.\|$ on~$E$
given by $\|v_1+\cdots +v_n\|:=\max\{\|v_j\|_j\colon j=1,\ldots, n\}$
for $v_j\in E_j$.
\end{proof}
{\bf Proof of Proposition~\ref{propadapt}.}
For each $\rho\in R(\alpha)\setminus\{0\}$, we choose a norm $\|.\|_\rho$
on $E_\rho$ as described in Lemma~\ref{normrho}.
Lemma~\ref{rhozero}
provides an ultrametric norm $\|.\|_0$
on $E_0$, with respect to which
$\alpha|_{E_0}$ has operator norm $<\ve$.
Then
\[
\Big\| \sum_{\rho\in R(\alpha)} x_\rho\Big\| \; :=
\; \max\, \big\{ \,\|x_\rho\|_\rho\colon \rho\in R(\alpha)\,\big\}\quad
\mbox{for $\,(x_\rho)_{\rho\in R(\alpha)} \in \prod_{\rho\in R(\alpha)}
E_\rho$}
\]
defines a norm $\|.\|\colon E\to[0,\infty[$
which, by construction, is adapted to~$\alpha$
and with respect to which
$\alpha|_{E_0}$ has operator norm $<\ve$.\,\vspace{3mm}\Punkt

\noindent
We are now ready to prove Theorem~A from the Introduction.\\[2.5mm]
{\bf Proof of Theorem A.}
By Proposition~\ref{propadapt},
there exists an ultrametric norm $\|.\|\wt{\;}$
on $E$ which is adapted to~$\alpha$,
and with respect to which
$\alpha|_{E_0}$ has operator norm $<a$.

\emph{Centre-stable subspaces.} The conditions
from Definition~\ref{defcsub}
are satisfied with $\|.\|:=\|.\|\wt{\;}$
and
%
\begin{equation}\label{sodecacs}
E_{a,\obcs}:=\bigoplus_{\rho\leq a} E_\rho\quad\mbox{ and }\quad
E_{a,\obu}:= \bigoplus_{\rho >a} E_\rho\,.
\end{equation}

\emph{Centre subspaces.}
The conditions
of Definition~\ref{defacentre}
are satisfied with $\|.\|:=\|.\|\wt{\;}$ and
%
\begin{equation}\label{sodecac}
E_{a,\obs}:=\bigoplus_{\rho< a} E_\rho,\quad
E_{a,\obc}:=E_a,\quad
\quad\mbox{ and }\quad
E_{a,\obu}:= \bigoplus_{\rho >a} E_\rho\,.
\end{equation}

\emph{Hyperbolicity.}
If $a\not\in R(\alpha)$,
then the conditions
from the definition of $a$-hyperbolicity
(stated in the Introduction)
are satisfied with $\|.\|:=\|.\|\wt{\;}$,
%
\begin{equation}\label{soahyp}
E_{a,\obs}:=\bigoplus_{\rho< a} E_\rho \quad
\quad\mbox{ and }\quad
E_{a,\obu}:= \bigoplus_{\rho >a} E_\rho\,.
\end{equation}
If $a\in R(\alpha)$,
then $\alpha$ cannot be $a$-hyperbolic.
In fact,
if $\alpha$ was $a$-hyperbolic,
we obtain a norm $\|.\|$
and a splitting $E=E_{a,\obs}\oplus E_{a,\obu}$
as in the cited definition.
Define $\alpha_1:=\alpha|_{E_{a,\obs}}$ and
$\alpha_2:=\alpha|_{E_{a,\obu}}$.
Because the norms $\|.\|$ and $\|.\|\wt{\;}$
are equivalent, there exists
$C>0$ such that $C^{-1}\|.\|\leq \|.\|\wt{\;}\leq C\|.\|$.
Let $0\not=v\in E_a$.
Write $v=x+y$ with $x\in E_{a,\obs}$ and $y\in E_{a,\obu}$.
If $y\not=0$, then
\[
\|v\|\wt{\;}=a^{-n}\|\alpha^n(v)\|\wt{\;}
\geq
a^{-n} C^{-1}\|\alpha^n(v)\|
\geq C^{-1} \left(\frac{1}{a \|\alpha_2^{-1}\|}\right)^n \|y\|
\]
for all $n\in \N$, which is absurd because
$\frac{1}{a\|\alpha_2^{-1}\|}>1$.
Hence $y=0$ and thus $x=v\not=0$.
But then
\[
\|v\|\wt{\;}=a^{-n}\|\alpha^n(v)\|\wt{\;}
\leq a^{-n} C \|\alpha^n(v)\|
\leq C \left(\frac{\|\alpha_1\|}{a}\right)^n\|v\|
\quad
\mbox{for all $\, n\in \N$.}
\]
Since $\frac{\|\alpha_1\|}{a}<1$,
this is absurd. Thus $\alpha$
cannot be $a$-hyperbolic.\,\vspace{1mm}\Punkt
\section{Behaviour close to a fixed point}\label{seccon}
%
We now relate
the behaviour of a dynamical system $(M,f)$
around a fixed point~$p$
and properties of the linear map $T_p(f)$.
%
%
\begin{numba}\label{situnow}
Let $M$ be an analytic manifold modelled on an ultrametric Banach space
over a complete ultrametric field $(\K,|.|)$.
Let $f\colon M_0\to M$ be an analytic
mapping on an open subset $M_0\sub M$ and $p\in M_0$ be a fixed
point of $f$, such that $T_p(f)\colon T_p(M)\to T_p(M)$
is an automorphism.
\end{numba}
%
%
%
%
\begin{prop}\label{sin}
In \,{\rm\ref{situnow}},
the following conditions are equivalent:
\begin{itemize}
\item[\rm(a)]
$T_p(M)$ admits a centre-stable
subspace with respect to $T_p(f)$,
and
each neighbourhood $P$ of $p$ in $M_0$ contains a neighbourhood $Q$
of $p$ such that $f(Q)\sub Q$.
\item[\rm(b)]
There exists a norm $\|.\|$ on $T_p(M)$ defining its topology,
such that $\|T_p(f)\|\leq 1$ holds for the corresponding operator norm.
\end{itemize}
If, moreover, $M$ is a finite-dimensional
manifold, then {\rm(a)} and {\rm(b)}
are also equivalent to the following condition:
\begin{itemize}
\item[\rm(c)]
Each eigenvalue $\lambda$ of $T_p(f)\otimes_\K \id_{\wb{\K}}$
in an algebraic closure $\wb{\K}$ of $\K$ has absolute value
$|\lambda|\leq 1$.
\end{itemize}
\end{prop}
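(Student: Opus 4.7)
The plan is to prove (b)$\Rightarrow$(a)$\Rightarrow$(b), and then (b)$\Leftrightarrow$(c) in the finite-dimensional case via Proposition~\ref{propadapt}.

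For (b)$\Rightarrow$(a), write $\alpha:=T_p(f)$. Given a norm $\|.\|$ on $T_p(M)$ with $\|\alpha\|\le 1$, the trivial decomposition $E_{1,\obcs}:=T_p(M)$, $E_{1,\obu}:=\{0\}$ satisfies Definition~\ref{defcsub} (with the convention $1/\|\alpha_2^{-1}\|=\infty>1$), so a centre-stable subspace exists. For the neighbourhood condition, pick a chart $\kappa$ at $p$ with $\kappa(p)=0$, transport $\|.\|$ to the modelling space $E$ (equivalent to the original Banach norm), and write the chart expression $\tilde f:=\kappa\circ f\circ\kappa^{-1}$ in the form $\tilde f(x)=\alpha(x)+R(x)$. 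By \ref{remstrict}, choose $\ve>0$ so small that $\Lip(R|_{B^E_\ve(0)})\le 1$; the ultrametric inequality then yields $\|\tilde f(x)\|\le\max(\|\alpha(x)\|,\|R(x)-R(0)\|)\le\|x\|<\ve$ for $x\in B^E_\ve(0)$. Hence the nested family $Q:=\kappa^{-1}(B^E_\ve(0))$ provides a neighbourhood basis of $p$ with $f(Q)\sub Q$.

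For (a)$\Rightarrow$(b), the plan is to show that the centre-stable decomposition must satisfy $E_{1,\obu}=\{0\}$, whence $\|\alpha\|=\|\alpha_1\|\le 1$ in the adapted norm from Definition~\ref{defcsub}. Suppose for contradiction that $0\neq y'\in E_{1,\obu}$. Working in a chart at $p$ with the norm $\|.\|$ of Definition~\ref{defcsub}, write $\tilde f(x)=\alpha(x)+R(x)$ and choose $\ve>0$ so small that $b:=\Lip(R|_{B^E_\ve(0)})<a:=1/\|\alpha_2^{-1}\|$ (where $a>1$). By the hypothesis there is an $f$-invariant neighbourhood $Q\sub\kappa^{-1}(B^E_\ve(0))$ of $p$, and after scaling $y'$ by some $\lambda\in\K^\times$ of sufficiently small absolute value, $y:=\lambda y'\in E_{1,\obu}$ is nonzero with $\kappa^{-1}(y)\in Q$. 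Decomposing $\tilde f^n(y)=u_n+v_n$ along $E_{1,\obcs}\oplus E_{1,\obu}$ and likewise $R=R_{\obcs}+R_{\obu}$, the ultrametric orthogonality of Definition~\ref{defcsub}(a), combined with $\|\alpha_2(v_n)\|\ge a\|v_n\|$, $\|R_\bullet(\tilde f^n(y))\|\le b\|\tilde f^n(y)\|$, and $\|\alpha_1(u_n)\|\le\|u_n\|$, allows one to verify by induction that $\|u_n\|<\|v_n\|$ and $\|v_{n+1}\|=\|\alpha_2(v_n)\|\ge a\|v_n\|$; the equality uses (\ref{domi}) since the error is strictly dominated by the expanding part. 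This forces $\|\tilde f^n(y)\|=\|v_n\|\ge a^n\|y\|\to\infty$, contradicting $\tilde f^n(y)\in B^E_\ve(0)$.

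In the finite-dimensional case, (b)$\Leftrightarrow$(c) follows from Proposition~\ref{propadapt}. Since $\alpha$ is an automorphism, $0\not\in R(\alpha)$; an adapted norm $\|.\|$ then satisfies $\|\alpha(x)\|=\rho\|x\|$ for $x\in E_\rho$, so $\|\alpha\|=\max R(\alpha)$, giving (c)$\Rightarrow$(b) at once. Conversely, if (b) holds via some norm $\|.\|'$, it is equivalent to the adapted $\|.\|$ (automatic in finite dimension), with $C^{-1}\|.\|\le\|.\|'\le C\|.\|$; the identity $\|\alpha^n x\|=\rho^n\|x\|$ for $x\in E_\rho$ combined with $\|\alpha^n x\|'\le\|x\|'$ forces $\rho^n\le C^2$ for every $n$, so $\rho\le 1$. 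The main obstacle is the ultrametric bookkeeping in (a)$\Rightarrow$(b): the orthogonal decomposition of Definition~\ref{defcsub}(a) and the strict domination of the Lipschitz remainder by the expansion factor $a=1/\|\alpha_2^{-1}\|$ are what make the iteration genuinely geometric rather than merely perturbative.
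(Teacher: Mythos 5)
Your proof is correct, but it takes a genuinely different route from the paper's in both nontrivial implications. For (b)$\impl$(a) the paper simply invokes the centre-stable manifold theorem of \cite{EXP} (Theorem~1.9\,(c)) to produce the invariant neighbourhood basis, whereas you prove it by hand from strict differentiability (\ref{remstrict}) and the ultrametric inequality; both arguments tacitly read the norm in (b) as ultrametric, which is consistent with how the paper itself interprets (b) as ``$E$ equals its centre-stable subspace.'' For (a)$\impl$(b) the paper argues via the inverse map: it picks $b\in\;]\|\alpha^{-1}|_{E_{1,\obu}}\|,1[$, applies the local $b$-stable manifold theorem \cite[Theorem~6.6]{EXP} to $g:=f^{-1}$ to get a point $x\not=p$ with $g^n(x)\to p$, and derives a contradiction because an invariant neighbourhood $Q$ avoiding $x$ must nevertheless contain $x=f^m(g^m(x))$. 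You instead run a direct expansion estimate: decomposing $\tilde f^n(y)=u_n+v_n$ along $E_{1,\obcs}\oplus E_{1,\obu}$ and using that the Lipschitz constant of the remainder is eventually below the expansion factor $1/\|\alpha_2^{-1}\|>1$, you show by induction (via (\ref{domi}) and the orthogonality in Definition~\ref{defcsub}\,(a)) that $\|v_n\|$ grows geometrically, so no point with nonzero $E_{1,\obu}$-component can have a bounded forward orbit; your bookkeeping here checks out. The paper's route is shorter given the machinery of \cite{EXP} and reuses results needed elsewhere; yours is self-contained and makes the underlying mechanism explicit, essentially reproving the relevant fragment of the unstable-manifold analysis. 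In the finite-dimensional step your direct computation with an adapted norm ($\|\alpha^n x\|=\rho^n\|x\|$ on $E_\rho$ forces $\rho\le 1$, and conversely $\|\alpha\|=\max R(\alpha)$) replaces the paper's appeal to (\ref{sodecacs}) and the uniqueness of $E_\obcs$; both are sound.
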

\begin{proof}
(b) means that $E:=T_p(M)$ coincides with
its centre-stable subspace with respect to $\alpha:=T_p(f)$.
If $E$ is finite-dimensional,
this property is equivalent to $R(\alpha)\sub \;]0,1]$
and hence to~(c),
by~(\ref{sodecacs}) (using that $E_{\obcs}$
is unique).
If~(b) holds, then (a) follows with
\cite[Theorem~1,9\,(c)]{EXP}.\footnote{If $E$ is finite-dimensional, this corresponds
to the conclusions concerning centre-stable
manifolds in the Local Invariant Manifold Theorem stated above.}

(a)$\impl$(b): If (a) holds,
then $E$ admits a decomposition
$E=E_{1,\obcs}\oplus E_{1,\obu}$
and a norm $\|.\|$, as described in Definition~\ref{defcsub}
(with $a=1$). After shrinking $M_0$, we may
assume that $M_1:=f(M_0)$ is open in $M$ and $f \colon M_0\to M_1$
is a diffeomorphism (by the Inverse Function Theorem).

If $E_{1,\obu}\not=\{0\}$,
we let $P\sub M_0\cap M_1$
be an open neighbourhood of $p$ such that $f(P)\sub P$,
and consider the map $g:=f^{-1}\colon M_1\to M$.
Then $E_{1,\obu}$ is the stable subspace of~$E$
with respect to $T_p(g)=\alpha^{-1}$.
Pick $b\in \;] \|\alpha^{-1}|_{E_{1,\obu}}\| ,1[$.
Then $\alpha^{-1}$ is $b$-hyperbolic, and
\[
E_{b,\obs}=E_{1,\obu} \quad \mbox{ as well as  }\quad E_{b,\obu}=E_{1,\obcs}
\]
(with respect to the automorphisms $\alpha^{-1}$ and $\alpha$
on the left and right of the equality signs, respectively).
By \cite[Theorem~6.6]{EXP}
(applied to $g|_P\colon P\to M$),
there exists a local $b$-stable
manifold $N\sub P$ with respect to~$g$,
such that $g^n(x)\to p$ as $n\to\infty$,
for all $x\in N$.
Since $N$ is tangent to
$E_{1,\obu}\not=\{0\}$,
we have $N\not=\{p\}$
and thus find a point $x\in N\setminus\{p\}$.
By hypothesis~(a), there is
an open $p$-neighbourhood $Q\sub P\setminus \{x\}$
with $f(Q)\sub Q$.
Since $g^n(x)\to p$,
there exists $m\in \N$ with
$y:=g^m(x)\in Q$.
Then $x=f^m(y)\in f^m(Q)\sub Q$,
contradicting the choice of~$Q$.
Hence $E_{1,u}=\{0\}$ (and thus (b) holds).
\end{proof}
%
%
%
\begin{prop}\label{typeR}
In \,{\rm\ref{situnow}},
the following conditions are equivalent:
\begin{itemize}
\item[\rm(a)]
$T_p(M)$ admits a centre subspace
with respect to $T_p(f)$, and
each\linebreak
neighbourhood $P$ of $p$ in $M_0$ contains a neighbourhood $Q$
of $p$ such that $f(Q)=Q$.
\item[\rm(b)]
There exists a norm $\|.\|$ on $T_p(M)$ defining its topology,
which makes $T_p(f)$ an isometry.
\end{itemize}
If, moreover, $M$ is a finite-dimensional
manifold, then {\rm(a)} and {\rm(b)}
are also equivalent to the following condition:
\begin{itemize}
\item[\rm(c)]
Each eigenvalue $\lambda$ of $T_p(f)\otimes_\K \id_{\wb{\K}}$
in an algebraic closure $\wb{\K}$ of $\K$ has absolute value
$|\lambda|=1$.
\end{itemize}
\end{prop}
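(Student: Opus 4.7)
\emph{Plan.} The argument parallels the proof of Proposition~\ref{sin}, with centre subspaces replacing centre-stable subspaces and an added symmetry $f\leftrightarrow f^{-1}$ used to treat stable and unstable parts on an equal footing. Write $E:=T_p(M)$ and $\alpha:=T_p(f)$. Condition~(b) is equivalent to $E = E_{1,\obc}$: the ``$\Leftarrow$'' direction is Definition~\ref{defacentre}(b) with $a=1$, giving $\|\alpha(x)\|=\|x\|$ for $x\in E_{1,\obc}=E$; for ``$\Rightarrow$'', the trivial splitting $E_{1,\obs}=E_{1,\obu}=\{0\}$, $E_{1,\obc}=E$ equipped with the isometric norm satisfies all clauses of Definition~\ref{defacentre} (using the convention $1/0:=\infty$). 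In the finite-dimensional case, the explicit formula~(\ref{sodecac}) together with Proposition~\ref{propadapt} and uniqueness of the centre splitting then shows $E=E_{1,\obc}\Leftrightarrow R(\alpha)\subseteq\{1\}$, yielding (b) $\Leftrightarrow$ (c).

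For (b) $\Rightarrow$ (a), assume $E=E_{1,\obc}$. The centre-manifold result of \cite[Theorem~1.10]{EXP} (or part~(b) of the Local Invariant Manifold Theorem in the finite-dimensional case) furnishes a centre manifold $N\ni p$ with $f(N)=N$ which is a submanifold of~$M$. As $T_p(N)=E_{1,\obc}=T_p(M)$, the submanifold $N$ is open in $M$; the accompanying ``basis of open neighbourhoods'' clause then supplies, inside any prescribed neighbourhood $P$ of $p$ in $M_0$, an open $Q\subseteq P$ with $f(Q)=Q$.

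The substantive direction is (a) $\Rightarrow$ (b). Shrink $M_0$ so that $f\colon M_0\to M_1:=f(M_0)$ is an analytic diffeomorphism between open subsets. Given the decomposition $E=E_{1,\obs}\oplus E_{1,\obc}\oplus E_{1,\obu}$ provided by (a), I aim to show $E_{1,\obs}=E_{1,\obu}=\{0\}$. Assume first that $E_{1,\obs}\neq\{0\}$ and choose $a\in\,]\|\alpha|_{E_{1,\obs}}\|,1[$. A direct check, using the norm from Definition~\ref{defacentre} and the fact that $\alpha|_{E_{1,\obc}}$ is an isometric automorphism (its bijectivity following from uniqueness of the centre subspace for the automorphism $\alpha$), shows that $\alpha$ is $a$-hyperbolic with $E_{a,\obs}:=E_{1,\obs}$ and $E_{a,\obu}:=E_{1,\obc}\oplus E_{1,\obu}$. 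By \cite[Theorem~6.6]{EXP} there exists a local $a$-stable manifold $N$ around $p$; by the ``basis of neighbourhoods'' clause I may assume $N\subseteq M_0$, $f(N)\subseteq N$, and $f^n(x)\to p$ for every $x\in N$, so forward orbits through $N$ remain in $M_0$. Since $T_p(N)=E_{1,\obs}\neq\{0\}$, some $x_0\in N\setminus\{p\}$ exists. Applying (a) to the neighbourhood $(M_0\cap M_1)\setminus\{x_0\}$ of $p$ yields an open $Q$ with $f(Q)=Q$ and $x_0\notin Q$. Pick $n$ with $f^n(x_0)\in Q$. Now I run the orbit backward into $Q$: for $1\leq k\leq n$, the point $f^k(x_0)\in Q=f(Q)$ has some preimage in $Q$ under $f$, which by injectivity of $f|_{M_0}$ and the orbit-inclusion $f^{k-1}(x_0)\in M_0$ must equal $f^{k-1}(x_0)$; hence $f^{k-1}(x_0)\in Q$. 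Descending inductively gives $x_0\in Q$, contradicting $x_0\notin Q$. The case $E_{1,\obu}\neq\{0\}$ is symmetric: apply the same argument to $g:=f^{-1}$, noting that the injectivity of $f|_{M_0}$ combined with $f(Q)=Q$ yields $g(Q)=Q$, so (a) carries over to $g$, while the $1$-stable subspace of $\alpha^{-1}$ coincides with $E_{1,\obu}$. The main technical obstacle is the backwards-induction bookkeeping, which requires the full orbit $\{f^k(x_0)\}_{k=0}^n$ to lie in the domain $M_0$ of invertibility of $f$; this is precisely what forward-invariance and shrinkability of the local stable manifold secure.
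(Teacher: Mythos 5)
Your proposal is correct, and its treatment of (b)$\Leftrightarrow$(c) (via (\ref{sodecac}) and uniqueness of the centre splitting) and of (b)$\Rightarrow$(a) (via the centre manifold theorem and openness of a submanifold tangent to all of $T_p(M)$) matches the paper's. Where you diverge is the substantive direction (a)$\Rightarrow$(b): the paper disposes of it in three lines by quoting the already-proved implication ``(a)$\Rightarrow$(b)'' of Proposition~\ref{sin} twice --- once for $f$ (killing $E_{1,\obu}$, using only $f(Q)\subseteq Q$) and once for $g=f^{-1}$ (killing $E_{1,\obs}$, since that is the unstable subspace of $\alpha^{-1}$ and $f(Q)=Q$ gives $g(Q)=Q$). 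You instead rebuild the contradiction from scratch, and in mirror image: you attack $E_{1,\obs}$ first, using a local $a$-stable manifold of $f$ itself and a backward preimage chase $f^k(x_0)\in Q=f(Q)\Rightarrow f^{k-1}(x_0)\in Q$, which genuinely needs the equality $f(Q)=Q$ plus injectivity of $f|_{M_0}$ (whereas the paper's mechanism, $x=f^m(y)\in f^m(Q)\subseteq Q$ for $y=g^m(x)$, needs only the inclusion but must work along the unstable direction). Both arguments are sound; yours is self-contained and exhibits directly why the stronger hypothesis $f(Q)=Q$ is what lets one control the stable summand, while the paper's is shorter and reuses Proposition~\ref{sin} as a black box. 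Your side remarks --- that $\alpha$ restricts to an automorphism of $E_{1,\obc}$, that the $a$-hyperbolic splitting $E_{a,\obs}=E_{1,\obs}$, $E_{a,\obu}=E_{1,\obc}\oplus E_{1,\obu}$ satisfies the norm and expansion conditions, and that forward invariance of the local stable manifold keeps the orbit inside the domain of injectivity --- are exactly the points that need checking, and they all go through.
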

\begin{proof}
(b) means that $E:=T_p(M)$ coincides with
its centre subspace with respect to $\alpha:=T_p(f)$.
If $E$ is finite-dimensional,
this property is equivalent to $R(\alpha)\sub \{1\}$
and hence to~(c),
by (\ref{sodecac}) (using the uniqueness
of $E_{\obc}$).
If~(b) holds, then~(a) follows with
\cite[Theorem~1.10\,(c)]{EXP}.\footnote{If $E$
is finite-dimensional, see also the conclusions concerning centre
manifolds in the Local Invariant Manifold Theorem above.}

(a)$\impl$(b):
After shrinking $M_0$, we may
assume that $M_1:=f(M_0)$ is open in $M$ and $f \colon M_0\to M_1$
is a diffeomorphism.
If (a) holds,
then there is a decomposition
$E=E_{1,\obs}\oplus E_{1,\obc}\oplus E_{1,\obu}$
and a norm $\|.\|$, as in Definition~\ref{defacentre}
(with $a=1$). By ``(a)$\impl$(b)''
in Proposition~\ref{sin},
we have $E_{1,\obu}=\{0\}$.
Applying
Proposition~\ref{sin}
to $g:=f^{-1}\colon M_1\to M$,
we see that also $E_{1,\obs}=\{0\}$
(because this is the unstable subspace of $T_p(M)$
with respect to $T_p(g)=\alpha^{-1}$).
Thus $E=E_{1,\obc}$,
establishing~(b).
\end{proof}
The proofs show that $Q$ can always be chosen
as an \emph{open} subset of $M_0$, in part (a)
of Proposition \ref{sin} and \ref{typeR}.
%
\begin{defn}\label{typesfp}
In the situation of {\rm\ref{situnow}},
we use the following terminology:
\begin{itemize}
\item[(a)]
$p$ is said to be an \emph{attractive} fixed point
of $f$ if $p$  has a neighbourhood
$P\sub M_0$ such that $f^n(x)$ is defined
for all $x\in P$ and $n\in \N$,
and $\lim_{n\to\infty}f^n(x)=p$ for all $x\in P$.
\item[(b)]
We say that $p$ is \emph{uniformly attractive}
if it is attractive and, moreover,
every neighbourhood of~$p$ in~$M_0$
contains a neighbourhood~$Q$ of~$p$ such that
$f(Q)\sub Q$.
\end{itemize}
\end{defn}
%
%
\begin{prop}\label{unicon}
In \,{\rm\ref{situnow}},
the following conditions are equivalent:
\begin{itemize}
\item[\rm(a)]
$T_p(M)$ admits a centre subspace with respect
to $T_p(f)$, and $p$ is\linebreak
uniformly attractive;
\item[\rm(b)]
There exists a norm $\|.\|$ on $T_p(M)$ defining its topology,
such that $\|T_p(f)\|<1$ holds for the corresponding operator norm.
\end{itemize}
If, moreover, $M$ is a finite-dimensional
manifold, then {\rm(a)} and {\rm(b)}
are also equivalent to the following condition:
\begin{itemize}
\item[\rm(c)]
Each eigenvalue $\lambda$ of $T_p(f)\otimes_\K \id_{\wb{\K}}$
in an algebraic closure $\wb{\K}$ of $\K$ has absolute value
$|\lambda| < 1$.
\end{itemize}
\end{prop}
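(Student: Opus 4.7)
\medskip

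\noindent
\textbf{Proof plan for Proposition~\ref{unicon}.}
The strategy mirrors the proofs of Propositions~\ref{sin} and~\ref{typeR}: interpret (b) as the assertion that $E:=T_p(M)$ equals its $1$-stable subspace with respect to $\alpha:=T_p(f)$, and deduce (c) from this via the adapted norms of Proposition~\ref{propadapt}. Indeed, if~(b) holds and $\lambda\in\wb\K$ is an eigenvalue of $\alpha_{\wb\K}$ on some $0\ne v$, then $|\lambda|\,\|v\|=\|\alpha v\|\leq \|\alpha\|\,\|v\|<1$, so $R(\alpha)\sub\;]0,1[$ (the lower bound comes from $\alpha$ being an automorphism); conversely, if $R(\alpha)\sub\;]0,1[$, then~(\ref{soahyp}) with $a=1$ gives $E=E_{1,\obs}$ and $E_{1,\obu}=\{0\}$, so the adapted norm $\|.\|\wt{\;}$ from Proposition~\ref{propadapt} satisfies $\|\alpha(x)\|\wt{\;}=\rho\|x\|\wt{\;}$ on each $E_\rho$ with $\rho<1$, and since $R(\alpha)$ is finite we get $\|\alpha\|<1$. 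This establishes (b)$\aeq$(c) in the finite-dimensional case.

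For (b)$\impl$(a), the centre subspace with respect to~$\alpha$ is present trivially (take $E_{1,\obs}=E$, $E_{1,\obc}=E_{1,\obu}=\{0\}$). To verify uniform attractivity, work in a chart around~$p$ sending $p$ to~$0$; then $f$ has the form $y\mto \alpha(y)+R(y)$ with $R(0)=0$ and $\Lip(R|_{B^E_\ve(0)})\to 0$ as $\ve\to 0$ by~\ref{remstrict}. Choose $c\in \;]\|\alpha\|,1[$ and $\ve$ so small that $\Lip(R|_{B^E_\ve(0)})\leq c$. The ultrametric inequality then gives $\|f(y)\|\leq \max\{\|\alpha\|\|y\|,c\|y\|\}=c\|y\|$, so $B_\ve^E(0)$ is $f$-invariant and $f^n(y)\to 0$ for every $y\in B^E_\ve(0)$. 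Any chart neighbourhood of~$p$ in~$M_0$ contains such a ball, giving~(a).

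The main work is (a)$\impl$(b). Since uniform attractivity supplies the neighbourhood condition appearing in Proposition~\ref{sin}(a) and the centre subspace yields, in particular, a centre-stable subspace (namely $E_{1,\obs}\oplus E_{1,\obc}$), Proposition~\ref{sin} applied to $f$ gives $E_{1,\obu}=\{0\}$. It remains to show $E_{1,\obc}=\{0\}$. Suppose not. Apply the Local Invariant Manifold Theorem (stated above, part~(b)): since $\alpha$ is an automorphism and $E_{1,\obc}$ exists, there is a centre manifold $N\sub M_0$ around~$p$ that is a submanifold of~$M$, with $T_p(N)=E_{1,\obc}$ and $f(N)=N$. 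On $N$ the tangent map at~$p$ is $\alpha|_{E_{1,\obc}}$, which is an isometry by Definition~\ref{defacentre}(b) (with $a=1$). Working in a chart for~$N$ and applying~(\ref{flower}) to $f|_N$ at~$p$, there is a ball $B\sub N$ around~$p$ on which $f|_N$ is an isometry (in the chart norm). Shrinking $B$, we may arrange $f(B)\sub B$ and hence $\|f^n(x)-p\|=\|x-p\|$ for all $x\in B$ and all $n\in\N$. Since $T_p(N)=E_{1,\obc}\ne\{0\}$, the set $B$ contains points $x\ne p$, and every neighbourhood of~$p$ in~$M_0$ meets $B\setminus\{p\}$. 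Uniform attractivity forces $f^n(x)\to p$ for such~$x$, contradicting the constancy of $\|f^n(x)-p\|$. Hence $E_{1,\obc}=\{0\}$, so $E=E_{1,\obs}$ and the adapted norm from Definition~\ref{defcsub} (specialised to the stable case) satisfies $\|\alpha\|<1$.

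The only non-routine step is the last contradiction: one must be sure that the centre manifold is genuinely available as an $f$-invariant submanifold of~$M$ on which $f$ is defined and an isometry in a chart. This is exactly the content of the Local Invariant Manifold Theorem (b), so no new work is required beyond invoking it cleanly.
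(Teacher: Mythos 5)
Your proposal is correct and its skeleton matches the paper's: (b) is read as ``$E:=T_p(M)$ equals its stable subspace'', the finite-dimensional equivalence with (c) comes from the adapted norms of Proposition~\ref{propadapt} together with the uniqueness of $E_{1,\obs}$, and (a)$\impl$(b) is reduced to showing $E_{1,\obu}=\{0\}$ via Proposition~\ref{sin} and $E_{1,\obc}=\{0\}$ via a centre-manifold contradiction. You diverge genuinely in the two analytic steps. For (b)$\impl$(a) the paper invokes the local stable manifold of \cite[Theorem~6.6]{EXP} (which, being tangent to all of $E$, is open and $f$-invariant), whereas you redo the local contraction estimate by hand from~\ref{remstrict} and the ultrametric inequality; your version is more elementary and self-contained, the paper's recycles machinery it needs anyway. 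For killing $E_{1,\obc}$, the paper's Remark~\ref{rmweaker} takes two nested $f$-invariant centre manifolds $S\sub Q$ and uses injectivity of $f$ to trap the orbit of a point $x_0\in Q\setminus S$ away from~$p$; this needs only attractivity (not uniform attractivity) and no metric estimate on the centre manifold. You instead observe that $T_p(f)|_{E_{1,\obc}}$ is a surjective isometry and use~(\ref{flower}) to make $f$ an isometry in a chart of the centre manifold near~$p$, so that $\|\kappa(f^n(x))\|$ is constant --- a sound and arguably more transparent argument, though it proves a slightly weaker auxiliary statement than the Remark. Two small repairs: (i) the Local Invariant Manifold Theorem you cite is stated only for finite-dimensional $M$, while (a)$\aeq$(b) is asserted for Banach manifolds, so you should invoke \cite[Theorem~1.10\,(c)]{EXP} directly for the $f$-invariant centre manifold, as the paper does; (ii) your one-line deduction of $R(\alpha)\sub\;]0,1[$ from $\|\alpha\|<1$ tacitly extends the norm to $E_{\wb{\K}}$ --- it is cleaner, and is what the paper does, to note that $E=E_{1,\obs}$ and then appeal to the uniqueness of the stable subspace together with~(\ref{soahyp}).
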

\begin{proof}
(b) means that $E:=T_p(M)$ coincides with
its stable subspace with respect to $\alpha:=T_p(f)$.
If $E$ is finite-dimensional,
this property is equivalent to $R(\alpha)\sub \;]0,1[$
and hence to~(c),
by~(\ref{sodecac}) (using the uniqueness
of $E_{\obs}$). If (a) holds, then also (b),
as shall be verified in Remark~\ref{rmweaker}.

If (b) holds and $P\sub M_0$
is an open neighbourhood of~$p$,
then \cite[Theorem 6.6]{EXP}\footnote{If $E$
is finite-dimensional, see also the conclusions concerning
local stable manifolds in the Local Invariant Manifold Theorem above.}
(applied to $f|_P$ instead of $f$)
provides a local stable manifold $N\sub P$
such that $\lim_{n\to\infty}f^n(x)=p$ for all $x\in N$. 
Because $T_p(N)=E=T_p(M)$,
it follows that~$N$ is open in~$M$.
Since, moreover, $f(N)\sub N$ by definition of $N$,
we have verifed that $p$ is uniformly attractive.
\end{proof}
\begin{rem}\label{rmweaker}
If $p$ is merely attractive (but possibly not
uniformly) and $E:=T_p(M)$
admits a centre subspace with respect to $T_p(f)$,
we can still conclude that $E_{1,\obc}=\{0\}$.\\[2.5mm]
[After shrinking $M_0$, we may assume
that $f$ is injective.
Let $P\sub M_0$ be as in Definition~\ref{typesfp}\,(a).
If $E_{1,\obc}\not=\{0\}$, we let $Q\sub P$
be a centre manifold with respect to~$f$,
such that $f(Q)=Q$ (see \cite[Theorem~1.10\,(c)]{EXP}).
Since $E_{1,\obc}\not=\{0\}$, we must have $Q\not=\{p\}$,
enabling us to pick $x_0\in Q\setminus\{p\}$.
Using
\cite[Theorem~1.10\,(c)]{EXP}
again, we find
a centre manifold $S\sub Q\setminus\{x_0\}$
with respect to~$f$,
such that $f(S)=S$.
Since $f$ is injective, it follows that
$f(Q\setminus S)=Q\setminus S$
and thus $f^n(x_0)\in Q\setminus S$ for all $n\in \N_0$.
As $Q$ is a neighbourhood of~$p$,
we infer $f^n(x_0)\not\to p$ as $n\to\infty$.
Since $x_0\in P$, this contradicts the choice of~$P$.\,]
\end{rem}
\section{When {\boldmath$W_a^\obs(f,p)$} is
not only immersed}\label{notonly}
%
%
In general, $W_a^\obs$ is only an \emph{immersed}
submanifold of $M$,
not a submanifold
(cf.\
\cite[\S7.1]{SUR}
for an easy example).
We now describe
a criterion
(needed in~\cite{MaZ})
which prevents such pathologies.
%
%
\begin{prop}\label{Bangd}
Let $M$ be an analytic manifold modelled on an
ultrametric Banach space over a complete
ultrametric field.
Let $p\in M$ be a fixed point of
an analytic diffeomorphism
$f\colon M\to M$, such that
$E:=T_p(M)$ admits a centre-stable subspace
with respect to $T_p(f)$,
and $E_{1,\obu}=\{0\}$.
Then $W_a^\obs (f,p)$ is a submanifold
of~$M$, for each $a\in \;]0,1]$
such that $T_p(f)$ is $a$-hyperbolic.
\end{prop}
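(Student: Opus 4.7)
The plan is to show that $W_a^\obs:=W_a^\obs(f,p)$ is a submanifold of $M$ first in a neighbourhood of~$p$, and then to propagate this property to arbitrary $x\in W_a^\obs$ by means of the diffeomorphism~$f$. For the setup: $E_{1,\obu}=\{0\}$ forces $E=E_{1,\obcs}$, so Proposition~\ref{sin} supplies arbitrarily small open $f$-stable neighbourhoods $Q$ of $p$ in $M$ (i.e., $f(Q)\sub Q$). Since $T_p(f)$ is $a$-hyperbolic, fix a splitting $E=E_{a,\obs}\oplus E_{a,\obu}$ together with an ultrametric norm $\|.\|$ on $E$ as in the hyperbolicity definition. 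By the Local Invariant Manifold Theorem~(c), there is a local $a$-stable manifold $N$ around $p$ that is a submanifold of~$M$; choose a chart $\phi\colon U\to V\sub E$ of $M$ at $p$ with $\phi(p)=0$ and $\phi(N\cap U)=V\cap E_{a,\obs}$. Write $F:=\phi\circ f\circ\phi^{-1}$ and $F(y)=\alpha y+R(y)$ with $\alpha:=T_p(f)$; by Remark~\ref{remstrict}, $\Lip(R|_{B^E_\ve(0)})$ can be made as small as desired by shrinking~$\ve$.

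The key step is to show that, for a sufficiently small open $f$-stable neighbourhood $Q\sub U$ of~$p$, one has $W_a^\obs\cap Q=N\cap Q$. The inclusion ``$\supseteq$'' is clear since $\|\alpha|_{E_{a,\obs}}\|<a\leq 1$ makes orbits in~$N$ converge $a$-stably to~$p$. For ``$\subseteq$'', split $R(y)=(R_\obs(y),R_\obu(y))$ along $E_{a,\obs}\oplus E_{a,\obu}$; the relation $f(N\cap U)\sub N$ translates, after further shrinking~$U$, to $R_\obu(y_\obs,0)=0$ on a neighbourhood of~$0$ in $E_{a,\obs}$. Now let $x\in W_a^\obs\cap Q$ and put $y^{(n)}:=\phi(f^n(x))\in\phi(Q)$; $f$-stability of~$Q$ keeps all iterates in $\phi(Q)$. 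From $W_a^\obs=\bigcup_n f^{-n}(\Omega)$, choosing $\Omega\sub N$ (as permitted by uniqueness of the germ), there is $N_0$ with $y^{(N_0)}_\obu=0$. The identity $y^{(k+1)}_\obu=\alpha_2 y^{(k)}_\obu+R_\obu(y^{(k)})$ with $\alpha_2:=\alpha|_{E_{a,\obu}}$ invertible then permits backward induction: if $y^{(k+1)}_\obu=0$, then $\|y^{(k)}_\obu\|\leq\|\alpha_2^{-1}\|\,\|R_\obu(y^{(k)})\|$, and $R_\obu(y^{(k)}_\obs,0)=0$ together with the Lipschitz bound gives $\|R_\obu(y^{(k)})\|\leq\Lip(R|_{\phi(Q)})\,\|y^{(k)}_\obu\|$. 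Shrinking~$Q$ so that $\|\alpha_2^{-1}\|\Lip(R|_{\phi(Q)})<1$ forces $y^{(k)}_\obu=0$; descending to $k=0$ gives $x\in N$.

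For the conclusion, $\phi(W_a^\obs\cap Q)=\phi(Q)\cap E_{a,\obs}$, so $\phi|_Q$ is a submanifold chart for $W_a^\obs$ at~$p$. For arbitrary $x\in W_a^\obs$, $f^n(x)\to p$ in~$M$ provides some $n_0$ with $f^{n_0}(x)\in Q$; then $Q':=f^{-n_0}(Q)$ is open in~$M$, $W_a^\obs\cap Q'=f^{-n_0}(W_a^\obs\cap Q)$ by $f$-invariance of $W_a^\obs$, and $\phi\circ f^{n_0}|_{Q'}$ is a submanifold chart for $W_a^\obs$ at~$x$. The main difficulty lies in the back-propagation: it rests on the $f$-invariance of $N$ (to force $R_\obu$ to vanish on the stable slice), strict differentiability (to control $\Lip(R)$), $a$-hyperbolicity (to bound $\|\alpha_2^{-1}\|$), and crucially the $f$-stability of $Q$ granted by $E_{1,\obu}=\{0\}$ (to keep all forward iterates inside the chart).
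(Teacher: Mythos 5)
Your proof is correct in substance, and its skeleton matches the paper's: both obtain arbitrarily small open $f$-stable neighbourhoods $Q$ of $p$ from Proposition~\ref{sin} (which is exactly where the hypothesis $E_{1,\obu}=\{0\}$ enters), both reduce the problem to showing that $W_a^\obs\cap Q$ coincides near $p$ with a local $a$-stable manifold, and both then spread the submanifold property over all of $W_a^\obs$ via the diffeomorphisms $f^{-n}$ and the open cover by the sets $f^{-n}(Q)$. Where you genuinely differ is in the mechanism for the key identity. The paper proves $W_a^\obs\cap Q=\Omega\cap Q$ by a soft argument: since $f$ restricts to a diffeomorphism of $W_a^\obs$, the image $f(\Omega)$ is relatively open in $\Omega$, so $Q$ can be chosen with $\Omega\cap Q\sub f(\Omega)$ in addition to $f(Q)\sub Q$; for $x\in W_a^\obs\cap Q$ and the minimal $n$ with $f^n(x)\in\Omega$, the inclusion $f^n(x)\in\Omega\cap Q\sub f(\Omega)$ forces $f^{n-1}(x)\in\Omega$, so $n=0$. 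You instead re-derive the identity quantitatively in a straightening chart, by backward induction on the unstable component using $y^{(k)}_{\obu}=-\alpha_2^{-1}R_{\obu}(y^{(k)})$, the vanishing $R_\obu(\,\cdot_{\obs},0)=0$ forced by $f(N)\sub N$, and the smallness condition $\|\alpha_2^{-1}\|\Lip(R)<1$. This is a valid Hadamard--Perron-type estimate, but it repeats work already packaged into the Ultrametric Stable Manifold Theorem, whereas the paper's minimality argument needs no coordinates, no Lipschitz estimates and no germ-uniqueness. Two small repairs to your write-up: (i) the Local Invariant Manifold Theorem you invoke for $N$ is stated only for finite-dimensional $M$, while the proposition concerns arbitrary Banach-modelled $M$; you should either cite the Banach version \cite[Theorem~6.6]{EXP} or, more simply, take $N:=\Omega$ itself (a submanifold of $M$ by the Ultrametric Stable Manifold Theorem), which also removes the need for the germ-uniqueness step ``$\Omega\sub N$''; (ii) when you quote $\Lip(R|_{\phi(Q)})$ you should arrange $Q\sub\phi^{-1}(B^E_\ve(0))$ with $\ve$ small, which Proposition~\ref{sin} permits since it produces $Q$ inside any prescribed neighbourhood of~$p$.
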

\begin{proof}
Let $W_a^\obs:=W_a^\obs(f,p)$ and
$\Omega \sub W_a^\obs$ be as
in the Ultrametric Stable Manifold Theorem
from the Introduction.
Since $f$ restricts to a diffeomorphism of $W_a^\obs$,
the image $f(\Omega)$ is relatively
open in $\Omega$.
Hence,
there exists an open $p$-neighbourhood $Q\sub M$
such that $\Omega\cap Q\sub f(\Omega)$.
By ``(b)$\impl$(a)'' in Proposition \ref{sin},
we may assume that $f(Q)\sub Q$,
after replacing $Q$ with a smaller
neighbourhood of $p$ if necessary.
We claim that
%
\begin{equation}\label{goclaim}
W_a^\obs \cap Q
=\Omega\cap Q\,.
\end{equation}
If this is true, then
$W_a^\obs \cap Q$
is a submanifold
of $M$, and hence also
\[
f^{-n}(W_a^\obs\cap Q)= f^{-n}(W_a^\obs)\cap f^{-n}(Q)=W_a^\obs\cap f^{-n}(Q)
\]
is a submanifold of~$M$ (as $f^{-n}\colon M\to M$
is a diffeomorphism).
Since $\bigcup_{n\in \N_0}f^{-n}(Q)$ is an open subset
of~$M$ which contains $W_a^\obs$ (exploiting that
$f^n(x)\in Q$ for large~$n$, for each $x\in W_a^\obs$),
we deduce that $W_a^\obs$ is a submanifold of~$M$
(and the submanifold structure
coincides with the given immersed submanifold structure
on $W^\obs_a$, as both structures
coincide on each of the sets $f^{-n}(W^\obs_a\cap Q)$, $n\in\N_0$,
which form an open cover for $W^\obs_a$).\\[2.5mm]
To prove (\ref{goclaim}), suppose
that $x\in W_a^\obs\cap Q$ but $x\not\in \Omega\cap Q$
(and hence $x\not\in \Omega$).
Since $f(Q)\sub Q$, we then have
\[
f^n(x)\in Q\quad\mbox{for all $\,n\in \N_0$.}
\]
By definition of $\Omega$, there exists
$n\in \N_0$ such that $f^n(x)\in \Omega$.
We choose $n$ minimal and note that
$n\geq 1$ as $x\not\in \Omega$
by hypothesis.
Then $f^n(x)\in \Omega\cap Q\sub f(\Omega)$
and hence $f^{n-1}(x)=f^{-1}(f^n(x)) \in f^{-1}(f(\Omega))=\Omega$,
contradicting the minimality of~$n$.
Hence $x$ cannot exist
and thus $W_a^\obs\cap Q\sub \Omega\cap Q$.
The converse inclusion, $\Omega\cap Q\sub W_a^\obs\cap Q$,
being trivial,
(\ref{goclaim}) is proved.
\end{proof}
\section{Dependence of {\boldmath$a$}-stable manifolds
on~{\boldmath$a>0$}}\label{adepend}
%
%
We collect further results
in the finite-dimensional case
required in Section~6 and \cite{MaZ}.
In particular, we
study the dependence of $a$-stable manifolds
on the parameter~$a$.
%
%
\begin{prop}\label{adepprop}
Let $M$ be an analytic manifold modelled on a
finite-\linebreak
dimensional vector space over a complete
ultrametric field $(\K,|.|)$.
Let $p\in M$ be a fixed point of
an analytic diffeomorphism
$f\colon M\to M$.
Abbreviate $\alpha:=T_p(f)$
and define $R(\alpha)$ as in the Introduction.
Then the following holds:
\begin{itemize}
\item[\rm(a)]
If $R(\alpha)\sub \;]0,1]$, then
$W_a^\obs(f,p)$ is a submanifold
of $M$, for each $a\in \;]0,1]\setminus R(\alpha)$.
\item[\rm(b)]
If $0<a<b\leq 1$ and $[a,b]\cap R(\alpha)=\emptyset$,
then $W_a^\obs(f,p)=W_b^\obs(f,p)$.
\item[\rm(c)]
If $a\in \;]0,1]$ and $\,]0,a]\cap R(\alpha)=\emptyset$,
then $W_a^\obs(f,p)=\{p\}$.
\end{itemize}
\end{prop}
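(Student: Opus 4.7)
For part (a) I would invoke Proposition~\ref{Bangd}. The hypothesis $R(\alpha)\sub\,]0,1]$ combined with formula~(\ref{sodecacs}) at $a=1$ forces $E_{1,\obu}=\{0\}$, and Theorem~A supplies a centre-stable subspace of $E:=T_p(M)$ in the finite-dimensional setting. These are exactly the hypotheses of Proposition~\ref{Bangd}, so it yields the conclusion at once for every $a\in\,]0,1]$ for which $T_p(f)$ is $a$-hyperbolic, i.e.\ (again by Theorem~A) for every $a\in\,]0,1]\setminus R(\alpha)$.

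For part (b), the inclusion $W_a^\obs(f,p)\sub W_b^\obs(f,p)$ is immediate from the definition, because $a\leq b$ gives $a^{-n}\geq b^{-n}$. For the reverse inclusion, my first observation is that the hypothesis $[a,b]\cap R(\alpha)=\emptyset$ together with formula~(\ref{soahyp}) forces $E_{a,\obs}=E_{b,\obs}=:F$, and that every eigenvalue of $\beta:=\alpha|_F$ has absolute value strictly below~$a$. Proposition~\ref{propadapt} then yields a norm $\|.\|_F$ on~$F$ and a constant $c\in\,]0,a[$ with $\|\beta\|_F\leq c$. Next I would invoke the Ultrametric Stable Manifold Theorem with parameter~$b$ to obtain an open neighbourhood $\Omega$ of~$p$ in $W_b^\obs$ that is a submanifold of~$M$, is $f$-invariant, and satisfies $W_b^\obs=\bigcup_n f^{-n}(\Omega)$. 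Reading $f|_\Omega$ in a chart of~$\Omega$ tangent to~$F$ at~$p$, the strict-differentiability identity~(\ref{flower}) (applicable because $\beta$ is an automorphism of~$F$ in the finite-dimensional setting) delivers a geometric contraction at rate at most~$c$. Given $x\in W_b^\obs$, once the orbit enters $\Omega$ this produces $\|\kappa(f^n(x))\|\leq C c^n$ for a suitable constant~$C$, after transferring the estimate from the intrinsic chart on~$\Omega$ to the ambient chart~$\kappa$ on~$M$ (legitimate since $\Omega$ is a submanifold of~$M$ near~$p$). Dividing by $a^n$ introduces the factor $(c/a)^n\to 0$, placing $x$ in $W_a^\obs$.

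For part~(c), $\alpha=T_p(f)$ is an automorphism, so $0\notin R(\alpha)$, and the hypothesis $\,]0,a]\cap R(\alpha)=\emptyset$ therefore forces $R(\alpha)\sub\,]a,\infty[$. Consequently $E_{a,\obs}=\bigoplus_{\rho<a}E_\rho=\{0\}$, and since $a\notin R(\alpha)$ the map $\alpha$ is $a$-hyperbolic by Theorem~A. The Ultrametric Stable Manifold Theorem then describes $W_a^\obs$ as a zero-dimensional analytic manifold. Taking the open, $f$-invariant neighbourhood $\Omega$ of~$p$ in $W_a^\obs$ supplied by that theorem and shrinking it to $\{p\}$, the identity $W_a^\obs=\bigcup_n f^{-n}(\Omega)$ together with $f^{-n}(p)=p$ yields $W_a^\obs=\{p\}$.

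The main obstacle I anticipate lies in part~(b): translating the intrinsic contraction rate $c<a$ on $\Omega$ (natural in $\|.\|_F$) into the decay $a^{-n}\|\kappa(f^n(x))\|\to 0$ demanded by the definition of $W_a^\obs$ requires carefully matching three objects---the adapted norm on~$F$, the chart on~$\Omega$, and the ambient chart~$\kappa$ on~$M$. This is routine given that $\Omega$ is a genuine submanifold of~$M$ near~$p$ and that transition maps are analytic, but it is the only genuinely computational piece; the remainder consists of direct appeals to Proposition~\ref{Bangd}, Theorem~A, Proposition~\ref{propadapt} and the Ultrametric Stable Manifold Theorem.
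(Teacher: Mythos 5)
Your parts (a) and (c) track the paper's own proof: (a) is exactly the paper's one-line reduction to Proposition~\ref{Bangd} via Theorem~A and the explicit decomposition of $E_{1,\obu}$, and (c) is the same observation that $E_{a,\obs}=\{0\}$ forces the invariant neighbourhood $\Omega$ from the Ultrametric Stable Manifold Theorem to reduce to $\{p\}$, whence $W_a^\obs=\bigcup_n f^{-n}(\{p\})=\{p\}$.

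Part (b) is where you genuinely diverge, and your route works. The paper argues inside a single chart $\kappa$ with an adapted norm and invokes the internal graph description of the local stable manifolds from \cite[Theorem~6.2]{EXP}: it writes $\Omega_a=\kappa^{-1}(\Gamma_a)$ and $\Omega_b=\kappa^{-1}(\Gamma_b)$ with $\Gamma_a,\Gamma_b$ as in~(\ref{graess}), arranges equal radii so that $\Gamma_a\sub\Gamma_b$, and then concludes $\Gamma_a=\Gamma_b$ because both are graphs of functions over the \emph{same} domain in $X=E_{a,\obs}=E_{b,\obs}$; this gives $\Omega_a=\Omega_b$ and hence $W_a^\obs=W_b^\obs$ simultaneously as sets and as immersed submanifolds. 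You instead prove the nontrivial inclusion $W_b^\obs\sub W_a^\obs$ dynamically: the spectral gap $[a,b]\cap R(\alpha)=\emptyset$ yields $\|\alpha|_F\|\leq c<a$ in an adapted norm on $F=E_{a,\obs}=E_{b,\obs}$, and~(\ref{flower}) upgrades this to an exact contraction at rate $\leq c$ on a small $f$-invariant $\Omega\sub W_b^\obs$, which after the chart transfer gives $a^{-n}\|\kappa(f^n(x))\|\to 0$. This buys independence from the graph description in \cite{EXP} (you only use the statement of the Stable Manifold Theorem as quoted here, plus Proposition~\ref{propadapt}) at the price of the norm/chart bookkeeping you correctly flag as the delicate step. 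Two small additions would make it complete: first, your argument only yields equality of the underlying sets, so to recover equality of the immersed-submanifold structures you should invoke the uniqueness clause of the Ultrametric Stable Manifold Theorem together with $E_{a,\obs}=E_{b,\obs}$; second, in the degenerate case $R(\alpha)\,\cap\;]0,a[\;=\emptyset$ your $F$ is trivial and the constant $c$ is undefined, but there the claim already follows from part~(c).
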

\begin{proof}
(a) follows from Proposition~\ref{Bangd}
(using~(\ref{sodecac}) and Theorem~A).

(b)
Define $E:=T_p(M)$.
Let $\|.\|$ be a norm on
$E$ adapted to $\alpha:=T_p(f)$,
and $R(\alpha)$ as well as the subspaces
$E_\rho\sub E$ for $\rho>0$ be as in~\ref{findiset}.
Choose a chart $\kappa\colon P\to U\sub E$ of $M$ around~$p$
such that $\kappa(p)=0$.
Let $Q\sub P$ be an
open neighbourhood of~$p$ such that $f(Q)\sub P$;
after shrinking $Q$, we may assume that $\kappa(Q)=B_r^E(0)$ for some $r>0$.
Then $g:=\kappa\circ f|_Q\circ\kappa^{-1}|_{B_r^E(0)}\colon B^E_r(0)\to E$
expresses $f|_Q$ in the local chart $\kappa$.
By hypothesis on $a$ and $b$, we have
\[
X :=\bigoplus_{\rho<a}E_\rho=\bigoplus_{\rho<b}E_\rho
\quad\mbox{ and }\quad
Y :=\bigoplus_{\rho>a}E_\rho=\bigoplus_{\rho>b}E_\rho\,.
\]
Hence $E_{a,\obs}=E_{b,\obs}=X$
and $E_{a,\obu}=E_{b,\obu}=Y$,
by~(\ref{soahyp}).
Now let $\Omega_a$ and $\Omega_b$
be an $\Omega$ as in the Ultrametric Stable Manifold Theorem,
applied with $a$ and $b$, respectively.
By \cite[Theorem~6.2\,(f)]{EXP} and the proof of
Theorem~1.3 in \cite{EXP},
we may assume that $\Omega_a=\kappa^{-1}(\Gamma_a)$
and $\Omega_b=\kappa^{-1}(\Gamma_b)$,
where
%
\begin{eqnarray}
\Gamma_a & =& \{ z\in B_r^E(0)\colon \mbox{($\forall n\in \N_0$) $g^n(z)$
is defined and $\|g^n(z)\|\leq a^nr$}\}\; \mbox{and}\notag\\
\Gamma_b & =& \{ z\in B_t^E(0)\colon \mbox{($\forall n\in \N_0$) $g^n(z)$
is defined and $\|g^n(z)\|\leq b^nt$}\}\label{graess}
\end{eqnarray}
for certain $r,t>0$.
Moreover, by \cite[Theorem~6.2\,(e)]{EXP},
we may assume that $r=t$, after replacing both $r$ and $t$
by $\min\{r,t\}$.
Then $\Gamma_a\sub \Gamma_b$
by~(\ref{graess}),
and hence $\Gamma_a=\Gamma_b$
(since both sets are graphs of functions on the
same domain, by the cited theorem). 
Thus $\Omega_a=\Omega_b$,
entailing that $W_a^\obs(f,p)=W_b^\obs(f,p)$
as a set and also as an immersed submanifold
of~$M$ (cf.\ proof of \cite[Theorem~1.3]{EXP}).

(c) By~(\ref{soahyp}), we have
$E_{a,\obs}=\bigoplus_{\rho<a}E_\rho=\{0\}$,
whence $\Omega=\kappa^{-1}(\Gamma)=\{p\}$
in \cite[Theorem~1.3]{EXP} and its proof.
Thus $W_a^\obs(f,p)=\bigcup_{n\in \N_0}f^{-n}(\Omega)=\{p\}$.
\end{proof}
\section{Results for automorphisms of Lie groups}\label{seclie}
%
%
Throughout this section,
$G$ is an analytic Lie group
modelled on an ultrametric Banach space
over a complete ultrametric field $(\K,|.|)$,
and $\beta\colon G\to G$ an analytic automorphism.
Then the neutral element $e\in G$ is a fixed point of~$\beta$,
and hence our general theory applies.
We now compile some
additional conclusions which are specific to
automorphisms. Like results of the previous
sections, these are needed for the farther-reaching
Lie-theoretic applications described in the introduction.\\[2.5mm]
We begin with a corollary to Proposition~\ref{unicon}.
An automorphism $\beta\colon G\to G$
is called \emph{contractive}
if $\lim_{n\to\infty}\beta^n(x)=e$ for each $x\in G$.
\begin{cor}
If $G$ is finite-dimensional
and $\beta\colon G\to G$ a contractive\linebreak
automorphism,
then every eigenvalue $\lambda$ of
$ L(\beta) \tensor_\K  \id_{\wb{\K}}$ in an algebraic\linebreak
closure~$\wb{\K}$
has absolute value $|\lambda|<1$.
\end{cor}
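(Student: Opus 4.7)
The plan is to deduce the corollary from Theorem~D together with a dimension-comparison argument. Since $\beta$ is contractive, $U_\beta:=W^\obs(\beta,e)=\{x\in G:\beta^n(x)\to e\}$ coincides with all of~$G$ as a set. Theorem~D equips $U_\beta$ with an immersed Lie subgroup structure satisfying conditions (a)--(c) of the Ultrametric Stable Manifold Theorem; in the finite-dimensional setting, condition~(b), together with Theorem~A and the identification in~(\ref{soahyp}), yields $T_eU_\beta = E_\obs := \bigoplus_{\rho\in R(\alpha),\,\rho<1}E_\rho$, where $\alpha := L(\beta)$ and the subspaces $E_\rho\sub L(G)$ are those defined in~\ref{findiset}.

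The inclusion $\iota\colon U_\beta\hookrightarrow G$ is therefore an analytic, injective, bijective immersion between the finite-dimensional analytic manifolds $U_\beta$ and~$G$. A bijective injective immersion of such manifolds must preserve dimension: by the rank theorem, $\iota$ is around each $y\in U_\beta$ an analytic diffeomorphism onto an analytic submanifold of~$G$ of dimension $\dim U_\beta$; were $\dim U_\beta<\dim G$, each such local image would lie in a proper analytic submanifold of a chart of~$G$ around~$e$ and hence in a set with empty interior (a proper closed subspace of an ultrametric Banach space has empty interior, by the scaling argument in a norm). A Baire-category argument in a chart of~$G$ around~$e$ --- which is open in the complete Banach space $L(G)$ and so is a Baire space --- then prevents a countable union of such lower-dimensional analytic submanifolds from covering the chart, contradicting $\iota(U_\beta)=G$.

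Consequently $\dim E_\obs=\dim L(G)$, forcing $E_\rho=\{0\}$ for every $\rho\in R(\alpha)$ with $\rho\geq 1$; equivalently, every eigenvalue $\lambda$ of $L(\beta)\tensor_\K\id_{\wb{\K}}$ satisfies $|\lambda|<1$. The main obstacle is making the dimension step rigorous over an arbitrary complete ultrametric field: one must combine completeness of $L(G)$ (so that a chart of~$G$ is a Baire space) with a countable cover of a chart neighbourhood of~$e$ in~$U_\beta$ by local-immersion charts, which is available as $U_\beta$ is modelled on a finite-dimensional Banach space and is thus locally second countable. An alternative route, avoiding Baire category, is to first apply Remark~\ref{rmweaker} to the attractive fixed point~$e$ to get $E_{1,\obc}=\{0\}$ (so $1\notin R(\alpha)$, making $\alpha$ $1$-hyperbolic), after which $E_\obs$ is defined straight from the hyperbolic decomposition and Theorem~D applies without ambiguity; the dimension-comparison step then only has to exclude eigenvalues of modulus strictly greater than~$1$.
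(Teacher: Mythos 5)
Your argument takes a genuinely different route from the paper's. The paper's proof is local and group-theoretic: since every identity neighbourhood of $G$ contains an open subgroup, Lemma~1\,(a) of \cite{Sie} yields a $\beta$-invariant open subgroup inside any prescribed identity neighbourhood, so $e$ is a \emph{uniformly} attractive fixed point, and ``(a)$\impl$(c)'' of Proposition~\ref{unicon} (whose centre-subspace hypothesis is automatic in finite dimensions by Theorem~A) gives $R(L(\beta))\sub\;]0,1[$ at once. You instead argue globally: contractivity gives $W^\obs(\beta,e)=G$ as a set, Theorem~D (via Proposition~\ref{notmostgen}\,(b)) makes this an immersed submanifold tangent to $E_{1,\obs}=\bigoplus_{\rho<1}E_\rho$ where $E:=L(G)$, and a dimension count forces this sum to be all of $E$. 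That is a legitimate alternative which avoids Siebert's lemma entirely, at the price of invoking the full strength of Theorem~D where the paper only needs the local Propositions \ref{sin}--\ref{unicon}.

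The one step you must repair is the countability of the cover in your Baire-category argument. ``Locally second countable'' is not available over a general complete ultrametric field ($\K^d$ is second countable only if $\K$ is separable), and even for separable $\K$, local second countability of the immersed manifold $U_\beta$ does not by itself produce a countable family of immersion charts covering all of $U_\beta$ (its topology is finer than the induced one, and it need not be Lindel\"of a priori). The fix is available in the paper: by Proposition~\ref{notmostgen}\,(b), the final assertion of the Ultrametric Stable Manifold Theorem holds, so $G=W^\obs(\beta,e)=\bigcup_{n\in\N_0}\beta^{-n}(\Omega)$ for a single submanifold $\Omega$ of $G$ of dimension $\dim E_{1,\obs}$; each $\beta^{-n}(\Omega)$ is again a submanifold of that dimension, hence locally closed with empty interior and therefore nowhere dense if $\dim E_{1,\obs}<\dim G$, while a chart domain around $e$ is an open subset of the complete metric space $L(G)$ and hence a Baire space. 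With that substitution your argument closes. Your alternative ending via Remark~\ref{rmweaker} is also sound ($e$ is attractive, so $E_{1,\obc}=\{0\}$ and $1\notin R(L(\beta))$), but it still leaves the unstable part to be excluded by the same dimension/Baire step.
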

\begin{proof}
$G$ is complete by \cite[Proposition~2.1\,(a)]{FOR},
and metrizable. Since every identity neighbourhood $P$ in $G$
contains an open subgroup $U$ of $G$ (see, e.g.,
\cite[Proposition~2.1\,(a)]{FOR}),
Lemma~1\,(a) in \cite{Sie}
provides a $\beta$-invariant open subgroup $Q:=U_{(0)}\sub U\sub P$
of $G$. Hence~$e$ is a uniformly contractive fixed
point of~$\beta$, and thus ``(a)$\impl$(c)'' in Proposition~\ref{unicon}
applies.
\end{proof}
%
%
%
\begin{prop}\label{proprev}
If $a\in \;]0,1]$
and $L(\beta)$ is $a$-hyperbolic,
the following holds:
\begin{itemize}
\item[\rm(a)]
The $a$-stable manifold $W_a^\obs(\beta,e)$ is an immersed
Lie subgroup of~$G$.
\item[\rm(b)]
If, moreover, $L(G)$ admits a centre subspace
with respect to $L(\beta)$
and $L(G)_{1,\obu}=\{0\}$,
then $W_a^\obs(\beta,e)$ is a
Lie subgroup of~$G$.
\end{itemize}
\end{prop}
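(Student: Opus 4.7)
The plan is to first verify that $W_a^\obs(\beta,e)$ is an abstract subgroup of $G$, then upgrade the immersed submanifold structure already provided by the Ultrametric Stable Manifold Theorem to an immersed Lie subgroup structure, and finally deduce (b) from part (a) together with Proposition~\ref{Bangd}. For the subgroup step, I would fix a chart $\kappa\colon U\to V\sub L(G)$ with $\kappa(e)=0$ and an ultrametric norm $\|.\|$ on $L(G)$ defining its topology. Since $\mu(x,e)=x$, $\mu(e,y)=y$, and $\iota(e)=e$, strict differentiability of multiplication at $(e,e)$ and inversion at $e$ (compare~\ref{remstrict}) combined with the ultrametric inequality and (\ref{domi}) yield, on a sufficiently small neighbourhood of~$e$, the estimates $\|\kappa(uv)\|\leq\max(\|\kappa(u)\|,\|\kappa(v)\|)$ and $\|\kappa(u^{-1})\|\leq \|\kappa(u)\|$. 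Because $\beta^n$ is a homomorphism, $\beta^n(xy)=\beta^n(x)\beta^n(y)$ and $\beta^n(x^{-1})=\beta^n(x)^{-1}$; for $x,y\in W_a^\obs(\beta,e)$ both factors eventually lie in the small neighbourhood, so the decay rate $a^{-n}\|\kappa(\beta^n(\cdot))\|\to 0$ is preserved and the subgroup property follows.

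Next I would establish analyticity of multiplication and inversion with respect to the immersed manifold structure on $W_a^\obs$. The Ultrametric Stable Manifold Theorem furnishes a basis of open neighbourhoods $\Omega$ of~$e$ in $W_a^\obs$ that are actual submanifolds of~$G$. Fix such an $\Omega'$, then, using continuity of multiplication and inversion in~$G$, choose a smaller basis member $\Omega$ with $\Omega\cdot\Omega\sub \Omega'$ and $\Omega^{-1}\sub \Omega'$. Because $\Omega'$ is a submanifold of~$G$, any analytic map into~$G$ whose image lies in~$\Omega'$ is automatically analytic into~$\Omega'$; applied to the restrictions of~$\mu$ and~$\iota$, this gives local analyticity of the group operations on $W_a^\obs$ near~$e$.

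To globalise, I would exploit $\beta$-equivariance. By item (c) of the Stable Manifold Theorem, $\beta$ restricts to an analytic diffeomorphism of the immersed manifold $W_a^\obs$, and $\beta^n(x)\to e$ for every $x\in W_a^\obs$. Given $(x,y)\in W_a^\obs\times W_a^\obs$, pick $n$ large enough that $\beta^n(x),\beta^n(y)\in \Omega$; continuity of $\beta^n$ on $W_a^\obs$ yields a neighbourhood $V$ of $(x,y)$ mapping into $\Omega\times\Omega$ under $\beta^n\times\beta^n$. On $V$ we then write
\[
\mu \;=\; \beta^{-n}\circ \mu\circ (\beta^n\times\beta^n),
\]
which is a composition of maps that are analytic with respect to the $W_a^\obs$-structure. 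Hence $\mu$ is analytic at $(x,y)$, and the analogous argument handles inversion; this proves~(a).

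For part~(b), the assumption that $L(G)$ admits a centre subspace with respect to $L(\beta)$ produces in particular a centre-stable subspace (namely $L(G)_{1,\obs}\oplus L(G)_{1,\obc}$), and $L(G)_{1,\obu}=\{0\}$ is assumed, so Proposition~\ref{Bangd} applies and shows that $W_a^\obs(\beta,e)$ is already a submanifold of~$G$; combined with (a) this gives the Lie subgroup assertion. The main obstacle that I anticipate lies in the second paragraph: carefully arranging the size of $\Omega$ so that $\Omega\cdot\Omega$ and $\Omega^{-1}$ actually sit inside another submanifold basis member $\Omega'$, and arguing that the resulting ``analytic into $\Omega'$'' conclusion coincides with analyticity into $W_a^\obs$ for its immersed structure, since a priori the subspace topology from~$G$ on these local pieces must be matched against the coarser ambient topology on the immersed submanifold.
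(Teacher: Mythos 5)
Your part (b) is exactly the paper's argument: a centre subspace with $L(G)_{1,\obu}=\{0\}$ yields a centre-stable subspace equal to all of $L(G)$, so Proposition~\ref{Bangd} applies and, combined with (a), upgrades ``immersed Lie subgroup'' to ``Lie subgroup''. For part (a) the paper itself only cites \cite[Proposition~4.6]{MaZ}, so a direct argument is fair game; your first paragraph (the ultrametric estimates $\|\kappa(uv)\|\leq\max\{\|\kappa(u)\|,\|\kappa(v)\|\}$ and $\|\kappa(u^{-1})\|\leq\|\kappa(u)\|$ near $e$, plus $\beta^n(xy^{-1})=\beta^n(x)\beta^n(y)^{-1}$) correctly gives the abstract subgroup property, and your third paragraph (globalising via $\mu=\beta^{-n}\circ\mu\circ(\beta^n\times\beta^n)$ and part (c) of the Stable Manifold Theorem) is sound.

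The gap is the step you yourself flag: you cannot get $\Omega\cdot\Omega\sub\Omega'$ and $\Omega^{-1}\sub\Omega'$ from ``continuity of multiplication and inversion in $G$''. Continuity in $G$ controls where products land in the topology of $G$, but $\Omega'$ is only open in the immersed topology of $W_a^{\obs}$, which is strictly \emph{finer} than the topology induced from $G$; in general there is no $G$-open $U\ni e$ with $W_a^{\obs}\cap U\sub\Omega'$ --- this failure is precisely the phenomenon Section~\ref{notonly} addresses and the reason Proposition~\ref{Bangd} needs extra hypotheses. So for $u,v\in\Omega$ you know $uv\in W_a^{\obs}$ and $uv$ is $G$-close to $e$, but a priori $uv$ could be a point of $W_a^{\obs}$ that is far from $e$ in the immersed topology, hence outside $\Omega'$. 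The repair uses material you already half-deploy: by \cite[Theorem~6.2]{EXP} (as in the proof of Proposition~\ref{adepprop}) one may take $\Omega=\kappa^{-1}(\Gamma)$ with $\Gamma=\{z\in B^E_r(0)\colon \mbox{$g^n(z)$ defined and } \|g^n(z)\|\leq a^nr \mbox{ for all } n\}$; then your paragraph-one estimate applied to $\beta^n(u)\beta^n(v)$ gives $\|\kappa(\beta^n(uv))\|\leq\max\{\|\kappa(\beta^n(u))\|,\|\kappa(\beta^n(v))\|\}\leq a^nr$ for all $n$, i.e.\ $\Omega\cdot\Omega\sub\Omega$ (and likewise $\Omega^{-1}\sub\Omega$) once $r$ is small enough. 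With $\Omega$ thus a local subgroup that is a submanifold of $G$, the rest of your argument goes through; this is essentially the content of \cite[Proposition~4.6]{MaZ}, to which the paper defers.
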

\begin{proof}
(a) The proof of \cite[Proposition~4.6]{MaZ}
applies without changes.\footnote{In $\diamondsuit$, read ``$\,\leq a^n\,$''
as ``$\,< a^n r$.''}

(b) is a special case of Proposition~\ref{Bangd}.
\end{proof}
If $G$ is finite-dimensional,
then the extra hypotheses
in Proposition~\ref{proprev}\,(b)
mean that $R(L(\beta))\sub \;]0,1]$
(see Theorem~A and (\ref{sodecac})).\\[2.5mm]
In the following situation,
hyperbolicity
is not needed to make $W^\obs$
a manifold.
%
%
%
\begin{prop}\label{notmostgen}
If $\beta\colon G\to G$ is an automorphism
and $L(G)$ admits a centre subspace with respect
to $L(\beta)\colon L(G)\to L(G)$,
then the following holds:
\begin{itemize}
\item[\rm(a)]
There exist a local stable manifold $V_1$
and a centre manifold $V_0$ around~$e$ with respect to~$\beta$,
and a local stable manifold $V_{-1}$ around $e$ with respect
to $\beta^{-1}$,
such that $V_1V_0V_{-1}$ is open in $G$ and the product map
\begin{equation}\label{thepromp}
\pi \colon V_1\times V_0\times V_{-1} \to V_1V_0V_{-1}\,\quad (x,y,z)\mto xyz
\end{equation}
is an analytic diffeomorphism.
\item[\rm(b)]
There is a unique
immersed submanifold structure on
$W^\obs(\beta,e)$
such that
conditions {\rm (a)--(c)}
of the Ultrametric Stable Manifold Theorem $($from the Introduction$)$
are satisfied.
This immersed submanifold structure
makes $W^\obs(\beta,e)$
an immersed Lie subgroup
of~$G$,
and also the final assertion of the cited theorem
holds.
Moreover,
$W^\obs(\beta,e)=W_a^\obs(\beta,e)$
for some $a\in \;]0,1[$ such that
$L(\beta)$ is $a$-hyperbolic.
\end{itemize}
\end{prop}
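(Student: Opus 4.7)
For part~(a), I would first pick $a\in\;]0,1[$ close enough to~$1$ so that $L(\beta)$ is $a$-hyperbolic with $a$-stable subspace equal to $L(G)_\obs$ and $a$-unstable subspace equal to $L(G)_\obc\oplus L(G)_\obu$; symmetrically, I pick $a'\in\;]0,1[$ so that $L(\beta^{-1})$ is $a'$-hyperbolic with $a'$-stable subspace $L(G)_\obu$. Invoking the local stable and centre manifold results \cite[Theorems 1.9, 1.10 and 6.6]{EXP} then yields submanifolds $V_1,V_0,V_{-1}$ of~$G$ through~$e$, tangent to $L(G)_\obs$, $L(G)_\obc$, $L(G)_\obu$ respectively, with $V_1$ a local $a$-stable manifold for $\beta$, $V_0$ a centre manifold for $\beta$ (satisfying $\beta(V_0)=V_0$), and $V_{-1}$ a local $a'$-stable manifold for $\beta^{-1}$. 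The tangent map $T_{(e,e,e)}\pi$ is the sum map $L(G)_\obs\oplus L(G)_\obc\oplus L(G)_\obu\to L(G)$, an isomorphism by the centre-subspace hypothesis; by the ultrametric analytic inverse function theorem, $\pi$ is a local analytic diffeomorphism near $(e,e,e)$, and after shrinking each $V_i$ we obtain an analytic diffeomorphism onto the open subset $V_1V_0V_{-1}\sub G$.

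For part~(b), with the $a$ from~(a), the Ultrametric Stable Manifold Theorem equips $W_a^\obs(\beta,e)$ with an analytic immersed submanifold structure satisfying conditions (a)--(c) and the ``basis of open neighbourhoods'' assertion, and Proposition~\ref{proprev}(a) upgrades this to an immersed Lie subgroup of~$G$. The remaining task is to verify $W^\obs(\beta,e)=W_a^\obs(\beta,e)$; uniqueness of the immersed submanifold structure then follows from the uniqueness clause of the Ultrametric Stable Manifold Theorem. The inclusion ``$\supseteq$'' is immediate from the definition in~(\ref{dewaseq}).

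For ``$\sub$'', take $x\in W^\obs(\beta,e)$. Since $\beta^n(x)\to e$, for $n$ sufficiently large $\beta^n(x)$ lies in the open set $V_1V_0V_{-1}$ and can be decomposed uniquely via the chart from~(a) as $\beta^n(x)=u_nv_nw_n$. My goal is to show $v_n=e$ and $w_n=e$ for all large~$n$: once this holds, $\beta^n(x)\in V_1\sub W_a^\obs(\beta,e)$, and by $\beta$-invariance of $W_a^\obs(\beta,e)$ we conclude $x\in W_a^\obs(\beta,e)$.

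The main obstacle is precisely this vanishing of the centre and unstable coordinates. To handle it I would analyze the conjugate $F:=\pi^{-1}\circ\beta\circ\pi$ in the product chart: it is a local analytic diffeomorphism near $(e,e,e)$ whose tangent there is block-diagonal with blocks $L(\beta)|_{L(G)_\obs}$, $L(\beta)|_{L(G)_\obc}$, $L(\beta)|_{L(G)_\obu}$. The $\beta$-invariance of $V_1$ and of $V_0$ forces $F$ to preserve the ``slices'' $V_1\times\{e\}\times\{e\}$ and $\{e\}\times V_0\times\{e\}$, and the $\beta^{-1}$-invariance of $V_{-1}$ gives the corresponding statement for $F^{-1}$ in the third coordinate. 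Combining these symmetries with~(\ref{flower}) applied to~$F$ (and to~$F^{-1}$) in a norm on $L(G)$ adapted to the centre-subspace decomposition, I expect to conclude that $\max(\|v_n\|,\|w_n\|)$ is non-decreasing in~$n$ as long as the three coordinates remain in a small neighbourhood of~$0$, so $\beta^n(x)\to e$ is incompatible with $v_N\neq e$ or $w_N\neq e$ for any sufficiently large~$N$. This yields $\beta^N(x)=u_N\in V_1$, as needed.
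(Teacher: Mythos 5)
Your part (a) and the overall architecture of part (b) follow the paper's proof: decompose $L(G)=E_1\oplus E_0\oplus E_{-1}$ into stable, centre and unstable subspaces, choose $a$ so that $L(\beta)$ (resp.\ $L(\beta)^{-1}$) is $a$-hyperbolic with $a$-stable subspace $E_1$ (resp.\ $E_{-1}$), build $V_1$, $V_0$, $V_{-1}$ from the local invariant manifold theorems of \cite{EXP}, apply the inverse function theorem to $\pi$, and then try to show that the $V_0$- and $V_{-1}$-coordinates of $\beta^n(x)$ vanish. But at exactly the step you yourself flag as ``the main obstacle'' there is a genuine gap. Knowing that $F=\pi^{-1}\circ\beta\circ\pi$ preserves the slices $V_1\times\{e\}\times\{e\}$ and $\{e\}\times V_0\times\{e\}$, together with (\ref{flower}), does \emph{not} yield monotonicity of the centre/unstable coordinates: applying (\ref{flower}) to $z=(u,v,w)$ and $y=(u,0,0)$ gives only
$\max\{\|F_1(z)-F_1(y)\|,\|F_2(z)\|,\|F_3(z)\|\}=\max\{\|v\|,\|L(\beta)w\|\}$,
and the maximum on the left may well be attained in the first (stable) component, so nothing forces $\max\{\|F_2(z)\|,\|F_3(z)\|\}\geq\max\{\|v\|,\|w\|\}$. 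Invariance of the three axes through the identity is much weaker than control of $F$ at a point all of whose coordinates are nonzero; for a general analytic map with block-diagonal linearization this kind of decoupling simply fails.

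The idea you are missing is the one place where the group structure enters beyond mere invariance of the $V_j$: since $\beta$ is an automorphism, $\beta(xyz)=\beta(x)\beta(y)\beta(z)$, and because (after shrinking) $\beta$ maps each $V_j$ near $e$ into $V_j$, the conjugate of $\beta$ in the chart $\kappa=(\kappa_1\times\kappa_0\times\kappa_{-1})\circ\pi^{-1}$ is \emph{exactly} the direct product $g=g_1\times g_0\times g_{-1}$ of the three one-coordinate maps $g_j=\kappa_j\circ\beta\circ\kappa_j^{-1}$, with no cross terms whatsoever. Then $\|g^n(x)\|=\max\{\|g_1^n(x_1)\|,\|g_0^n(x_0)\|,\|g_{-1}^n(x_{-1})\|\}$, and (\ref{flower}) applied separately to each $g_j$ gives $\|g_0(x)\|=\|x\|$ and $\|g_{-1}(x)\|>a^{-1}\|x\|$ on a small ball, so $\|g^n(x)\|\to0$ immediately forces $x_0=x_{-1}=0$. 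Without this exact product decomposition your plan does not close. Two smaller points: you should record that $V_1\sub W_a^\obs(\beta,e)$, which is \cite[Theorem 6.6\,(c)]{EXP} and not part of the definition of a local $a$-stable manifold; and the paper establishes uniqueness of the immersed submanifold structure by a separate germ argument rather than by direct appeal to the uniqueness clause of the Ultrametric Stable Manifold Theorem, though once the set equality $W^\obs=W_a^\obs$ and $E_{1,\obs}=E_{a,\obs}$ are in hand your shortcut is defensible.
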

\begin{proof}
(a) Set $E:=L(G)$ and let $E=E_1\oplus E_0\oplus E_{-1}$
be the decomposition into a
stable subspace $E_1$, 
centre subspace $E_0$ and unstable subspace $E_{-1}$
with respect to $L(\beta)$,
and $\|.\|$ be an ultrametric norm as
in Definition \ref{defacentre}.
There is $a\in \;]0,1[$ such that
$\|L(\beta)|_{E_1}\|<a$
and $\frac{1}{\|L(\beta)^{-1}|_{E_{-1}}\|}>\frac{1}{a}$.
Then $L(\beta)$ is $a$-hyperbolic
with $a$-stable subspace $E_1$
and $a$-unstable subspace
$E_0\oplus E_{-1}$
(and the norm $\|.\|$ as before).
Also $L(\beta)^{-1}$ is $a$-hyperbolic,
with $a$-stable subspace $E_{-1}$
and $a$-unstable subspace
$E_0\oplus E_1$
(and the norm $\|.\|$ as before).
We let
$V_1$ be a local $a$-stable
manifold around~$e$ with respect to~$\beta$
and $V_{-1}$ be a local $a$-stable
manifold around~$e$ with respect to~$\beta^{-1}$
(see \cite[Theorem~6.6\,(a)]{EXP});
by \cite[Theorem 6.6\,(c)]{EXP}, we may assume that
$V_1\sub W_a^\obs(\beta,e)$.
Also, we let $V_0$ be a centre manifold
around $p$ with respect to~$\beta$ (see \cite[Theorem 1.10\,(a)]{EXP}).
Then $T_e(V_1)=E_1$, $T_e(V_0)=E_0$ and $T_e(V_{-1})=E_{-1}$,
whence
\[
L(G)\, =\, T_e(V_1)\oplus T_e(V_0) \oplus T_e(V_{-1})\, .
\]
Thus, after shrinking $V_1$, $V_0$ and $V_{-1}$
(which is possible by \cite[Theorems 6.6\,(c)
and 1.10\,(c)]{EXP}),
we may assume that $P:=V_1V_0V_{-1}$ is open in $G$
and the product map (\ref{thepromp})
is an analytic diffeomorphism
(by the Inverse Function Theorem~\cite{Bo1}).\vspace{1mm}

(b) Shrinking $V_1$, $V_0$ and $V_{-1}$ further
if necessary,
we may assume that there are $r>0$
and charts $\kappa_j\colon V_j\to B^{E_j}_r(0)$
with $\kappa_j(e)=0$ and $d\kappa_j=\id$
for $j\in \{-1,0,1\}$.
There is $s\in \;]0,r]$ such that $\beta(\kappa^{-1}_j(B^{E_j}_s(0)))\sub
V_j$ for all $j\in \{-1,0,1\}$.
Let $g_j:=\kappa_j\circ \beta \circ \kappa_j^{-1}|_{B^{E_j}_s(0)}$.
Shrinking~$s$, we achieve that
\begin{eqnarray}
\|g_0(x)\|\, &=& \,\;\|x\|\hspace*{1.08mm} \quad
\; \mbox{for each $x\in B^{E_0}_s(0)$,}\label{useelswh2}\\
\|g_1(x)\|\,  & < & \; a \|x\| \quad
\, \hspace*{.1mm}\mbox{for each $x\in B^{E_1}_s(0)$, and } \label{useelswh3}\\
\|g_{-1}(x)\| & > & a^{-1}  \|x\| \;\; \mbox{for each $x\in B^{E_{-1}}_s(0)$} \label{useelswh4}
\end{eqnarray}
(using (\ref{flower})).
Then
\[
\kappa:=(\kappa_1\times \kappa_0\times \kappa_{-1})\circ \pi^{-1}\colon
P\to B^E_r(0)
\]
is a chart of~$G$
around~$e$. We set
$g:=g_1\times g_0 \times g_{-1} \colon B^E_s(0)\to
B^E_r(0)$
(where $B^E_s(0)=B^{E_1}_s(0)\times B^{E_0}_s(0) \times B^{E_{-1}}_s(0)$).
Abbreviate $Q:=\kappa^{-1}(B^E_s(0))$.
Then
%
\begin{equation}\label{locconj}
\beta|_Q=\kappa^{-1}\circ g \circ\kappa|_Q\,.
\end{equation}
If $z\in W^\obs (\beta,e)$, there is $n_0\in \N_0$
such that $\beta^n(z)\in Q$ for all $n\geq n_0$,
and
\begin{equation}\label{wllwspr}
\|\kappa(\beta^n(z))\|\to 0\quad\mbox{as $\,n\to\infty$.}
\end{equation}
After replacing $z$ with $\beta^{n_0}(z)$,
we may assume that $n_0=0$.
Now $x=(x_1, x_0, x_{-1}):=\kappa(z)$
is an element of $B_s^E(0)$
such that $g^n(x)=\kappa(\beta^n(z))\in B^E_s(0)$
for all $n\in \N_0$
(cf.\ (\ref{locconj})).
Also
%
\begin{equation}\label{zwstp}
\lim_{n\to\infty}\|g^n(x)\|=0\,,
\end{equation}
by (\ref{wllwspr}).
Since
$\|g^n(x)\|=\max\{\|g_1^n(x_1)\|, \| g_0^n(x_0)\|,\|g_{-1}^n(x_{-1})\|\}$
for all $n\in \N_0$,
using (\ref{useelswh2}) and (\ref{useelswh4})
we obtain a contradiction to~(\ref{zwstp})
unless $x_0=0$ and $x_{-1}=0$.
Thus $x=x_1\in E_1$
and thus $z=\kappa_1^{-1}(x_1)\in V_1\sub W_a^\obs(\beta,e)$,\linebreak
entailing that $W^\obs(\beta,e)\sub W_a^\obs(\beta,e)$.
The converse inclusion being trivial,
we deduce that
$W^\obs(\beta,e)
=W_a^\obs(\beta,e)$.
We give $W^\obs(\beta,e)$
the manifold structure of
$W_a^\obs(\beta,e)$.
It then is tangent to $E_{a,\obs}=E_1$ at~$e$.
Hence $W^\obs(\beta,e)$
satisfies conditions (a)--(c)
of the Ultrametric Stable Manifold Theorem
and also the final assertion of the theorem.
To obtain the uniqueness of the immersed submanifold
structure subject to these conditions,
note that for any such structure on $W^\obs$,
each neighbourhood of $e$ in $W^\obs$ contains
an open $\beta$-invariant neighbourhood of~$e$
(as this only requires (\ref{domi})
and \ref{remstrict}).
Now one shows as in the proof of \cite[Theorem~6.6\,(b)]{EXP}
that the germ of the latter
coincides with the germ we\linebreak
already have,
and this entails as in the proof of the uniqueness
part of\linebreak
\cite[Theorem~1.3]{EXP}
that the new manifold structure on $W^\obs$
coincides with the one we already had
(further explanations are omitted,
because the assertion is not central).
All other assertions follow from
Proposition~\ref{proprev}.
\end{proof}
{\bf Proof of Theorem D.}
We now prove Theorem D.
The proof will provide
additional information:
\emph{$W^\obs(\beta,e)=W_a^\obs(\beta,e)$
for each $a\in \;]0,1[$ such that
$[a,1[\,\cap R(L(\beta))=\emptyset$
and $\;]1,\frac{1}{a}]\cap R(L(\beta))=\emptyset$}.\\[2.5mm]
If we choose $\|.\|$ as a norm adapted to $L(\beta)$ (as in
Definition \ref{defnadpt})
in the proof of Proposition \ref{notmostgen},
then $E_1$, $E_0$ and $E_{-1}$
are the direct sum of all $L(G)_\rho$
with $\rho\in R(L(\beta))$,
such that $\rho\in \;]0,1[$ (resp., $\rho=1$,
resp., $\rho\in \;]1,\infty[$), by (\ref{sodecac}).
If $a$ is as described at the beginning of the proof,
then $\|L(\beta)\|<a$ and $\|L(\beta)^{-1}\|<a$
(as is clear from (b) and (c) in Definition \ref{defnadpt}).
Therefore the proof of Proposition \ref{notmostgen}
applies with this choice of $a$.\,\Punkt
{\footnotesize
Helge Gl\"{o}ckner, Universit\"at Paderborn,
Institut f\"ur Mathematik,
Warburger Str.\ 100,\\
33098 Paderborn, Germany. E-Mail: glockner\at{}math.upb.de}

\begin{thebibliography}{99}
%
%
\bibitem{Aga}
Aguayo, J., M. Saavedra, M. Vallas,
\emph{Attracting and repelling points of analytic dynamical
systems of several variables in a non-archimedean formulation},
Theoret.\ and Math.\ Phys.\ {\bf 140} (2004),
1175--1181.
%
%
\bibitem{BaW}
Baumgartner, U. and G.\,A. Willis,
\emph{Contraction groups and scales of automorphisms
of totally disconnected, locally compact groups},
Israel J. Math.\ {\bf 142} (2004), 221--248.
%
%
\bibitem{Bo1}
Bourbaki, N., ``Vari\'{e}t\'{e}s diff\'{e}rentielles et analytiques.
Fascicule de r\'{e}sultats,'' Hermann, Paris, 1967.
%
%
\bibitem{Bo2}
Bourbaki, N., ``Lie Groups and Lie Algebras''
(Chapters 1--3), Springer, Berlin 1989.
%
%
\bibitem{SCA}
Gl\"{o}ckner, H.,
{\em Scale functions on $p$-adic Lie groups},
Manuscr.\ Math.\ {\bf 97} (1998), 205--215.
%
%
\bibitem{FOR}
Gl\"{o}ckner, H.,
\emph{Every smooth $p$-adic Lie group admits a compatible analytic
structure},
Forum Math.\ {\bf 18} (2006), 45--84.
%
%
\bibitem{MaZ}
Gl\"ockner, H., \emph{Contractible Lie groups
over local fields}, Math.\ Z.\ 260 (2008), 889--904.
%
%
\bibitem{SUR}
Gl\"{o}ckner, H.,
\emph{Lectures on Lie groups over local fields},
preprint, arXiv:0804.2234v4.
%
%
\bibitem{EXP}
Gl\"ockner, H., \emph{Invariant manifolds for analytic dynamical
systems over ultrametric fields}, Expo.\ Math.\ {\bf 31}
(2013), 116--150.
%
%
\bibitem{SPO}
Gl\"{o}ckner, H.,
{\em Automorphisms of Lie groups over local fields of positive
characteristic},
in preparation.
%
%
\bibitem{HaK}
Hasselblatt, B. and A. Katok, ``Handbook of Dynamical Systems,''
Volume~1\,A, Elsevier, 2002.
%
%
\bibitem{HaN}
Hilgert, J. and K.-H. Neeb,
``Structure and Geometry of Lie Groups,''
Springer, 2012.
%
%
\bibitem{HPS}
Hirsch, M.\,W., C.\,C. Pugh and M. Shub,
``Invariant Manifolds,'' Springer, 1977.
%
%
\bibitem{Ir1}
Irwin, M.\,C., {\em On the stable manifold theorem},
Bull.\ London Math.\ Soc.\ {\bf 2} (1970), 196--198.
%
%
\bibitem{Khr} Khrennikov, A. and M. Nilsson, ``$p$-Adic Deterministic and Random
Dynamical Systems,'' Kluwer, 2004.
%
%
\bibitem{Mar}
Margulis, G.\,A.,
``Discrete Subgroups of Semisimple
Lie Groups,''
Springer, 1991.
%
%
\bibitem{Sch}
Schikhof, W.\,H., ``Ultrametric Calculus,''
Cambridge University Press, 1984.
%
%
\bibitem{Ser}
Serre, J.-P.,
``Lie Algebras and Lie Groups,'' Springer, Berlin, 1992.
%
%
\bibitem{Sie}
Siebert, E., \emph{Semisimple convolution semigroups
and the topology of contraction groups}, pp.\
325--343 in:
H.\ Heyer (ed.),
``Probability Measures on Groups IX''
(Oberwolfach 1988),
Springer, Berlin, 1989.
%
%
\bibitem{Roo} van Rooij, A.\,C.\,M.,
`Non-Archimedean Functional Analysis,''
Marcel Dekker, 1978.
%
%
\bibitem{Wan}
Wang, J.\,S.\,P., {\em The Mautner phenomenon for $p$-adic
Lie groups}, Math.\ Z. {\bf 185} (1984), 403--412.
%
%
\bibitem{Wei}
Weil, A., ``Basic Number Theory,'' Springer, 1967.
%
%
\bibitem{Wel}
Wells, J.\,C., {\em Invariant manifolds of non-linear
operators}, Pacific J. Math.\ {\bf 62}\,(1976), 285--293.
%
%
\bibitem{Wi1}
Willis, G.\,A., {\em The structure of totally disconnected,
locally compact groups},
Math.\ Ann.\ {\bf 300} (1994), 341--363.
%
%
\bibitem{Wi2}
Willis, G.\,A, {\em Further properties of the scale function on a
totally disconnected group},
J. Algebra {\bf 237}\,(2001), 142--164.\vspace{2mm}
%
\end{thebibliography}
\end{document}